\numberwithin{equation}{section}
\theoremstyle{plain}
\newtheorem{thm}{Theorem}[section]
\newtheorem{lem}[thm]{Lemma}
\newtheorem{prop}[thm]{Proposition}
\newtheorem{cor}[thm]{Corollary}
\theoremstyle{definition}
\newtheorem{defn}[thm]{Definition}
\newtheorem*{defn*}{Definition}
\newtheorem{rem}[thm]{Remark}
\newtheorem{fact}[thm]{Fact}
\newtheorem{claim}[thm]{Claim}
\newcommand{\R}{\mathbb{R}}
\newcommand{\cA}{\mathcal{A}}
\newcommand{\cS}{\mathcal{S}}
\newcommand{\nsrk}{{\rm r}_{ns}}
\begin{document}

\title[]{Non-isomorphism of $A^{*n}, 2\leq n \leq \infty$, for a  non-separable \\  abelian von Neumann algebra $A$}

\author{R\'emi Boutonnet}
\address{Institut de Math\'ematiques de Bordeaux \\ CNRS \\ Universit\'e Bordeaux I \\ 33405 Talence \\ FRANCE}
\email{remi.boutonnet@math.u-bordeaux.fr}
\thanks{RB is supported by ANR grant AODynG 19-CE40-0008}

\author{Daniel Drimbe}
\address{Department of Mathematics, KU Leuven, Celestijnenlaan 200b, B-3001 Leuven, Belgium}
\email{daniel.drimbe@kuleuven.be}
\thanks {DD holds the postdoctoral fellowship fundamental research 12T5221N of the Research Foundation Flanders.}

\author{Adrian Ioana}
\address{Department of Mathematics, University of California San Diego, 9500 Gilman Drive, La Jolla, CA 92093, USA}
\email{aioana@ucsd.edu}
\thanks{AI is supported by NSF DMS grants 1854074 and 2153805, and a Simons Fellowship}

\author{Sorin Popa}
\address{Math Dept, UCLA, 520 Portola Plaza, 
Box 951555, Los Angeles, CA 90095, USA}
\email{popa@math.ucla.edu}
\thanks{SP Supported in part by NSF Grant DMS-1955812  and the Takesaki Endowed Chair at UCLA}

\maketitle

\begin{abstract}
We prove that if $A$ is a non-separable abelian  tracial von Neuman algebra then 
its free powers $A^{*n}, 2\leq n \leq \infty$, 
are mutually non-isomorphic and with trivial fundamental group, $\mathcal F(A^{*n})=1$, whenever $2\leq n<\infty$. This settles the non-separable version of the free group factor  problem. 
\end{abstract}

\section{Introduction}

The free group factor problem, asking whether the II$_1$ factors $L\mathbb F_n$ arising from the free groups with $n$ generators $\mathbb F_n$,  $2\leq n \leq \infty$, are isomorphic or not, 
is perhaps the most famous in operator algebras, being in a way emblematic for this area, broadly known even outside of it.

It is generally believed that the free group factors are not isomorphic.
Since $L\mathbb F_n=L\mathbb Z*\cdots*L\mathbb Z$, this amounts to $A^{*n}, 2\leq n \leq \infty$, 
being non-isomorphic, where $A=L\mathbb Z$ is the unique (up to isomorphism) separable diffuse abelian von Neumann algebra. 
Due to work in \cite{Ra94,Dy94}, based on Voiculescu’s free probability methods, this is also equivalent to 
the fundamental group of $A^{*n}$ being trivial for some (equivalently, all) $2\leq n <\infty$, $\mathcal F(A^{*n})=1$. 

We study here the non-separable version of the free group factor problem, asking whether 
the II$_1$ factors $A^{*n}, 2\leq n\leq\infty$, are non-isomorphic when $A$ is an abelian but non-separable  
von Neumann algebra (always assumed tracial, i.e., endowed 
with a given normal faithful trace).  
Examples of such algebras $A$ include the ultrapower von Neumann algebra 
$(L\mathbb Z)^\omega$ and the group von Neumann algebra $LH$, where $\omega$ is a free ultrafilter on $\mathbb N$ and $H$ is an uncountable discrete abelian group, such as  
$\mathbb R$ or $\mathbb Z^\omega$. 
We obtain the following affirmative answer to the problem: 

\begin{thm}\label{main} Let $A$ be a diffuse non-separable abelian tracial von Neumann algebra. 

Then the II$_1$ factors $A^{*n}, 2\leq n \leq \infty$, are mutually non-isomorphic, and have  
trivial fundamental group, $\mathcal F(A^{*n})=1$, whenever  $2\leq n<\infty$. 
\end{thm}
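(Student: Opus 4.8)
The plan is to argue by contradiction, reducing the theorem to a \emph{free-product intertwining lemma} in which the non-separability of $A$ substitutes for the rigidity property that abelian (amenable) algebras lack --- this is exactly what makes the non-separable problem tractable while the classical separable one remains open. Suppose there were an isomorphism $\theta\colon A^{*n}\to A^{*m}$ with $2\le n<m\le\infty$, and write $A^{*n}=A_1*\cdots*A_n$ and $A^{*m}=B_1*\cdots*B_m$, with each $A_i\cong B_j\cong A$. The images $\theta(A_1),\dots,\theta(A_n)$ are mutually freely independent, jointly generate $A^{*m}$, and each is a \emph{non-separable} diffuse abelian subalgebra of $A^{*m}$; moreover each is maximal abelian there, since freeness forces $B_j'\cap A^{*m}=B_j$ and likewise for the $\theta(A_i)$. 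I would show that such subalgebras are rigidly tied to the ambient free-product decomposition, and then deduce $n=m$.

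The crux is the following assertion. \emph{Let $N=N_1*\cdots*N_m$ be a free product of tracial von Neumann algebras and let $Q\subseteq N$ be a non-separable amenable (in particular, abelian) von Neumann subalgebra. Then a non-zero corner of $Q$ embeds, in Popa's intertwining sense $\prec_N$, into one of the free factors $N_j$.} The non-separability hypothesis is essential: for separable $Q$ the statement fails, e.g.\ the \emph{diagonal} copy of $L\mathbb{Z}$ generated by $uv$, with $u\in N_1$ and $v\in N_2$ Haar unitaries, embeds into no $N_j$. To prove the lemma I would run Popa's deformation/rigidity dichotomy with the $s$-malleable free deformation of $N$ (freely adjoin Haar unitaries to each factor and rotate, following Ioana--Peterson--Popa), using the standard criterion that if the deformation converges to the identity uniformly on the unit ball of $Q$ then $Q\prec_N N_j$ for some $j$. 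When uniform convergence fails, one combines Popa's transversality inequality with a basic-construction estimate and the amenability of $Q$ --- which allows a convexity/averaging argument producing an approximately invariant vector --- to conclude that a non-separable amenable $Q$ is simply too large to be moved uniformly off all the free factors at once, and so one is forced back into the intertwining alternative. Exhausting $Q$ by an increasing net of separable diffuse subalgebras, together with a maximality argument, organizes the bookkeeping.

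Granting the lemma, for each $i$ choose a projection $p_i\in\theta(A_i)$ with $p_i\theta(A_i)p_i\prec_{A^{*m}}B_{j(i)}$ for some index $j(i)$. Since $\theta(A_i)$ and $B_{j(i)}$ are both maximal abelian in $A^{*m}$, this intertwining can be upgraded --- via local conjugacy of maximal abelian subalgebras and control of quasi-normalizers in free products, in the style of Ozawa--Popa and of the Bass--Serre-type analyses of Chifan, Houdayer and Ioana --- to a genuine containment after a unitary conjugacy and on a corner. Running this for all $i$, and symmetrically for $\theta^{-1}$ applied to the $B_j$, yields a uniqueness statement for the decomposition of $A^{*m}$ into a free product of non-separable diffuse abelian factors: roughly, the assignment $i\mapsto j(i)$ must be injective, since two freely independent diffuse subalgebras cannot both intertwine into a common abelian subalgebra, giving $n\le m$; the symmetric bound forces $m\le n$, whence $n=m$, a contradiction. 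The same reasoning disposes of $A^{*\infty}$ versus $A^{*n}$ with $n$ finite, so the $A^{*n}$, $2\le n\le\infty$, are mutually non-isomorphic.

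For the fundamental group, fix $2\le n<\infty$ and $t\in\mathcal F(A^{*n})$, so that $A^{*n}\cong(A^{*n})^t$. The free-compression identities of Dykema and R\u{a}dulescu hold with $L\mathbb{Z}$ replaced by an arbitrary diffuse abelian $A$, presenting $(A^{*n})^t$ as a free product of diffuse abelian algebras together with matrix algebras, of total \emph{free dimension} $1+(n-1)/t^2$. Re-running the intertwining-plus-uniqueness argument in this interpolated setting --- now also keeping track of the matrix free factors --- forces $1+(n-1)/t^2=n$, i.e.\ $t=1$, so $\mathcal F(A^{*n})=1$. The main obstacle throughout is the intertwining lemma: establishing that a non-separable amenable subalgebra of a free product must embed into a single free factor. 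This is precisely the phenomenon that fails for separable amenable subalgebras --- the source of the difficulty of the classical free group factor problem --- so essentially all of the work lies in showing that, inside the deformation/rigidity dichotomy, non-separability can play the role of a rigidity property.
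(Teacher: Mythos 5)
Your intermediate target is the right one: the crux of the paper is indeed that a non-separable abelian subalgebra of a tracial free product must intertwine a corner into one of the free factors. But your proposed proof of that lemma is not an argument. The deformation/rigidity dichotomy you invoke requires a rigidity property of $Q$ (spectral gap, property (T), etc.) to rule out the ``deformation does not converge uniformly'' branch; amenability of $Q$ plus a ``convexity/averaging'' step producing almost invariant vectors pushes in the opposite direction and, more to the point, nothing in your sketch actually uses non-separability. The mechanism that works (and is what the paper does, via \cite{IPP05}) is a \emph{separable reduction plus relative commutant control}: if $Q\nprec_M M_1$ and $Q\nprec_M M_2$, the IPP criterion in terms of the free malleable deformation $\mathrm{m}_\rho$ produces a sequence of unitaries $u_n\in Q$ with $\|\mathrm{m}_{1-1/n}(u_n)\|_2\to 0$, so the separable algebra $Q_0=\{u_n\}''\subset Q$ still satisfies $Q_0\nprec_M M_i$; then $Q_0$ sits inside a separable subproduct $N_1*N_2$ with $N_i\subset M_i$ separable, and two applications of the IPP relative commutant theorem give $Q_0'\cap M=Q_0'\cap(N_1*N_2)$, which is separable. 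Since $Q$ is abelian, $Q\subset Q_0'\cap M$, contradicting non-separability. So non-separability enters through relative commutants, not through amenability; note also that your lemma as stated (for non-separable \emph{amenable} $Q$) is stronger than what this method yields — the correct hypothesis is that $Q'\cap pMp$ is non-separable, which is automatic only in the abelian case you need.

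The deduction of $n=m$ and the fundamental group step also have genuine gaps. The claim ``two freely independent diffuse subalgebras cannot both intertwine into a common abelian subalgebra'' is false: in $L\mathbb F_2=\{a\}''*\{b\}''$ the algebras $\{a\}''$ and $b\{a\}''b^*$ are freely independent and both are unitarily conjugate onto $\{a\}''$. Hence injectivity of $i\mapsto j(i)$ fails for the reason you give; moreover picking one corner per $\theta(A_i)$ cannot count anything when $A$ has a separable direct summand (the corner could be separable and carry no information), when pieces of one $\theta(A_i)$ split among several $B_j$'s, or when $n,m$ are infinite. What is needed is a numerical invariant tracking the \emph{traces} of the purely non-separable pieces: the paper decomposes every purely non-separable singular MASA into pieces unitarily conjugated \emph{onto} corners $B_je_j$, uses Popa's disjointness theorem for MASAs to force pieces of disjoint MASAs onto orthogonal corners, and packages this into the sans-rank $\mathrm{r}_{ns}$, which is additive under free products and equals $n\tau(p)$ for $A^{*n}$ ($p$ the maximal purely non-separable summand of $A$). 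Finally, your fundamental group argument presupposes an identification of $(A^{*n})^t$ for arbitrary $t$ and non-separable $A$ together with a well-defined ``free dimension'' invariant; neither is available — the paper explicitly leaves the computation of $(A^{*n})^t$ open except for $t=1/k$ with $A$ homogeneous, and free dimension is not known to be an isomorphism invariant even in the separable case (assuming it is begs the question). The paper instead gets $\mathcal F(A^{*n})=\{1\}$ for free from the elementary scaling formula $\mathrm{r}_{ns}(M^t)=\mathrm{r}_{ns}(M)/t$ combined with $0<\mathrm{r}_{ns}(A^{*n})<\infty$.
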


In other words, if the abelian components of a free product $A^{*n}$ are being ``magnified’’ from separable to non-separable, then 
the corresponding II$_1$ factors do indeed remember the number of terms involved. 
One should note that if $2\leq n\leq\infty$, then any II$_1$ factor $A^{*n}$, with $A$ diffuse abelian,  
is an inductive limit of subfactors isomorphic to $L\mathbb F_n$.

To prove Theorem \ref{main}, we show that the II$_1$ factors of the form  $M=A_1 *\cdots *A_n$, 
with $A_1, A_2,\cdots, A_n$ non-separable abelian, have a remarkably rigid structure. 
Specifically, we prove that given any unital abelian von Neumann subalgebra $B\subset pMp$ that is purely non-separable (i.e., has no separable direct summand) and singular (i.e., has trivial normalizer), there is a partition of $p$ into projections $p_i\in B$ such that $Bp_i$ is unitarily 
conjugate to a direct summand of $A_i$, for every $1\leq i\leq n$ (see Corollary \ref{corner_identification}). 
This implies that the family 
$\{A_ip_i\}_i$, consisting of the maximal purely non-separable direct summands of $A_i$, $1\leq i\leq n$, 
coincides with the {\it sans-core} of $M$, 
a term we use to denote the maximal family $\mathcal A^{ns}_M = \{B_j\}_j$ of pairwise disjoint, singular,   
purely non-separable abelian subalgebras $B_j$ of $M$. The uniqueness (up to unitary conjugacy, cutting and gluing) of this family 
ensures that the {\it sans-rank} of $M$, defined by  $$\text{\rm r}_{ns}(M):=\sum_j \tau(1_{B_j})\in [0,+\infty],$$ 
is an isomorphism  invariant for $M$. This shows in particular that  
if $A$ is a diffuse non-separable abelian von Neumann algebra and $Ap$ is its maximal purely non-separable direct summand,  then  $\text{r}_{\text{ns}}(A^{*n})=n \tau(p)$, for every $2\leq n\leq \infty$, 
implying the non-isomorphism in the first part of Theorem \ref{main}. 
Since the sans-rank is easily seen to satisfy the amplification formula 
$\text{\rm r}_{ns}(M^t)=\text{\rm r}_{ns}(M)/t$, for every $t>0$, the last part of the theorem follows as well. 

We define the sans-core and sans-rank of a II$_1$ factor in Section \ref{sans}, where we also discuss some basic properties, 
including the amplification formula for the sans-rank. 
In Section \ref{mainresults} we prove that $\text{\rm r}_{ns}(*_{i\in I} M_i)=\sum_{i\in I}\text{\rm r}_{ns}(M_i)$, for any family $M_i,i\in I$, of tracial von Neumann algebras (see Theorem \ref{fp sum}) and use this formula to deduce Theorem \ref{main}. The proof of Theorem \ref{fp sum} uses intertwining by bimodules techniques and control of relative commutants 
 in amalgamated free product II$_1$ factors from \cite{IPP05}. 
 Notably, we use results from \cite{IPP05} to show that any von Neumann subalgebra $P$ of a tracial free product $M=M_1*M_2$ which has a non-separable relative commutant, $P'\cap M$,  must have a corner which embeds into $M_1$ or $M_2$ (see Theorem \ref{sep}). The last section of the paper, Section \ref{Remarks and open problems}, records some further remarks and open problems.
 
\section{The singular abelian core of a II$_1$ factor}\label{sans}
The aim of this section is to define the singular abelian core a II$_1$ factor and its non-separable analogue. We start by recalling some terminology involving von Neumann algebras. 
We will always work with tracial von Neumann algebras, i.e., von Neumann algebras $M$ endowed with a fixed faithful normal trace $\tau$. We endow $M$ with the $2$-norm given by $\|x\|_2=\tau(x^*x)^{1/2}$ and denote by $\mathcal U(M)$ its group of unitaries and by $(M)_1=\{x\in M\mid \|x\|\leq 1\}$ its (uniform) unit ball. We assume that all von Neumann subalgebras are unital.
For a von Neumann  subalgebra $A\subset M$, we denote by $\text{E}_A:M\rightarrow M$ the conditional expectation onto $A$ and by $\mathcal N_M(A)=\{u\in\mathcal U(M)\mid uAu^*=A\}$ the normalizer of $A$ in $M$. We say that a von Neumann algebra $M$ is {\it purely non-separable} if $pMp$ is non-separable, for every nonzero projection $p\in M$.
\subsection{Interwining by bimodules}

We recall the intertwining by bimodules theory from \cite[Theorem 2.1 and Corollary 2.3]{Po03}. 

\begin{thm}[\cite{Po03}]\label{intertwining} Let $(M,\tau)$ be a tracial von Neumann algebra and $A\subset pMp,B\subset qMq$ be von Neumann subalgebras. Then the following conditions are equivalent.
\begin{enumerate}
\item There exist nonzero projections $p_0\in A, q_0\in B$, a $*$-homomorphism $\theta\colon p_0Ap_0\rightarrow q_0Qq_0$  and a nonzero partial isometry $v\in q_0M p_0$ such that $\theta(x)v=vx$, for all $x\in p_0Ap_0$.

\item There is no net $u_n\in\mathcal U(A)$ satisfying $\|\emph{E}_B(x^*u_ny)\|_2\rightarrow 0$, for all $x,y\in pM$.
\end{enumerate}
\end{thm}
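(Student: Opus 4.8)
The plan is to prove the equivalence via the Jones basic construction. Let $\langle M,e_B\rangle$ be the von Neumann algebra on $L^2(M)$ generated by $M$ (acting by left multiplication) and the orthogonal projection $e_B$ onto $\overline{L^2(B)}$; recall $\langle M,e_B\rangle=(JBJ)'$, that it carries a canonical faithful normal semifinite trace $\hat\tau$ with $\hat\tau(xe_By)=\tau(xy)$ for $x,y\in M$, and that $e_Bxe_B=\mathrm E_B(x)e_B$. The first, routine, step is to reformulate (2). Unfolding the net: (2) fails exactly when for every finite $F\subseteq pM$ and every $\varepsilon>0$ there is $u\in\mathcal U(A)$ with $\|\mathrm E_B(x^*uy)\|_2<\varepsilon$ for all $x,y\in F$. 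Using $\hat\tau\big((u^*S_Fu)S_F\big)=\sum_{x,y\in F}\|\mathrm E_B(x^*uy)\|_2^2$, where $S_F:=\sum_{x\in F}xe_Bx^*\in\langle M,e_B\rangle$ has $\hat\tau(S_F)<\infty$, together with the fact that convex subsets of the Hilbert space $L^2(\langle M,e_B\rangle,\hat\tau)$ have the same weak and norm closures, one checks that this is equivalent to: for every finite $F$, $0$ lies in $\mathcal C_F:=\overline{\conv}^{\,w}\{u^*S_Fu:u\in\mathcal U(A)\}\subseteq\langle M,e_B\rangle$. Hence (2) holds $\iff$ some $\mathcal C_F$ misses $0$, and it remains to prove: (1) holds $\iff$ some $\mathcal C_F$ misses $0$.

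\emph{From ``some $\mathcal C_F$ misses $0$'' to (1).} Fix such an $F$. Since each generator $u^*S_Fu$ has $\|u^*S_Fu\|_{2,\hat\tau}=\|S_F\|_{2,\hat\tau}$, the set $\mathcal C_F$ is a bounded, weakly compact, convex subset of $L^2(\langle M,e_B\rangle,\hat\tau)$; let $d\in\mathcal C_F$ be its unique element of minimal $\|\cdot\|_{2,\hat\tau}$-norm. Invariance of $\mathcal C_F$ under the norm-preserving maps $T\mapsto u^*Tu$ ($u\in\mathcal U(A)$) and uniqueness force $u^*du=d$, so $d$ is affiliated with $A'\cap\langle M,e_B\rangle$; moreover $d\geq0$, $\hat\tau(d)\leq\hat\tau(S_F)<\infty$ by weak lower semicontinuity of $\hat\tau$, and $d\neq0$ precisely because $0\notin\mathcal C_F$. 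A spectral projection $f$ of $d$ (over $[\delta,\infty)$, $\delta>0$) is a nonzero projection in $A'\cap\langle M,e_B\rangle$ with $\hat\tau(f)<\infty$; equivalently $\mathcal H:=f\,L^2(M)$ is a nonzero $A$--$B$ Hilbert bimodule, finitely generated as a right Hilbert $B$-module ($\dim_{-B}\mathcal H=\hat\tau(f)<\infty$). To extract an intertwiner, pick a finite right-$B$ orthonormal system $\eta_1,\dots,\eta_n\in\mathcal H\subseteq L^2(M)$ (Gram--Schmidt over $B$: $\mathrm E_B(\eta_i^*\eta_j)=\delta_{ij}f_j$, $f_j\in B$ projections) spanning $\mathcal H$ over $B$; the left $A$-action becomes a normal unital $*$-homomorphism $\psi\colon A\to\mathbf f M_n(B)\mathbf f$, $\psi(a)_{ij}=\mathrm E_B(\eta_i^*a\eta_j)$, $\mathbf f=\mathrm{diag}(f_1,\dots,f_n)$. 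Fixing $j_0$ with $f_{j_0}\neq0$, approximating $\eta_{j_0}\in L^2(M)$ in $\|\cdot\|_2$ by a bounded element of $M$, and running a polar-decomposition/perturbation argument comparing $a\mapsto\mathrm E_B(\eta_{j_0}^*a\eta_{j_0})$ with the $(j_0,j_0)$-corner of $\psi$ yields nonzero projections $p_0\in A$, $q_0\leq f_{j_0}$ in $B$, a partial isometry $v\in q_0Mp_0$ and a $*$-homomorphism $\theta\colon p_0Ap_0\to q_0Bq_0$ with $\theta(x)v=vx$ for $x\in p_0Ap_0$, which is (1).

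\emph{From (1) to ``some $\mathcal C_F$ misses $0$''.} Let $(p_0,q_0,\theta,v)$ be as in (1). Using the type decomposition of $A$ one finds a nonzero subprojection $p_0'\leq p_0$ in $A$ of \emph{bounded multiplicity} (its central support $z'$ in $A$ satisfies $z'\preceq Np_0'$ in $A$ for some finite $N$) with $\tau\big(p_0'(v^*v)p_0'\big)>0$; then $v':=vp_0'\neq0$ and $\big(p_0',q_0,\theta|_{p_0'Ap_0'},v'\big)$ is again an intertwiner. Replacing $A$ by $Az'$ — harmless, since a net witnessing the failure of (2) for $A$, cut down by $z'$, witnesses it for $Az'$ — we may assume $p_0'$ has full central support in $A$ and $1_A\preceq Np_0'$; fix partial isometries $u_1,\dots,u_N\in A$ with $u_i^*u_i\leq p_0'$ and $\sum_{i=1}^N u_iu_i^*=1_A$. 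A short computation using $\theta(x)v'=v'x$ (which also gives $\theta(p_0')v'=v'$) and $\sum_i u_iu_i^*=1_A$, $u^*u=1_A$ shows that for every $u\in\mathcal U(A)$,
\[
\sum_{i=1}^N\big\|\mathrm E_B\big((u_i v'^*)^*\,u\,v'^*\big)\big\|_2^2
=\tau\big(\mathrm E_B(v'v'^*)\,\theta(p_0')\,\mathrm E_B(v'v'^*)\big)
=\|\mathrm E_B(v'v'^*)\|_2^2\;>\;0 .
\]
Thus with the fixed finite set $F_0:=\{v'^*,\,u_1v'^*,\dots,u_Nv'^*\}\subseteq M$ one has $\sum_{a,b\in F_0}\|\mathrm E_B(a^*ub)\|_2^2\geq\|\mathrm E_B(v'v'^*)\|_2^2>0$ for all $u\in\mathcal U(A)$, i.e.\ $0\notin\mathcal C_{F_0}$, so (2) holds.

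\emph{The main obstacle.} The reformulation of (2) and the weak-compactness/averaging step are soft and standard. The technical heart — and the step I would be most careful about — is the passage from the finite-$\hat\tau$ projection $f\in A'\cap\langle M,e_B\rangle$ (equivalently, the finitely generated $A$--$B$-subbimodule $\mathcal H\subseteq L^2(M)$) to an \emph{honest} intertwiner inside $M$: from the right-$B$-basis $\eta_1,\dots,\eta_n$ and $\psi\colon A\to M_n(B)$ one must manufacture a single partial isometry $v\in M$ and a $*$-homomorphism on a corner of $A$ satisfying $\theta(x)v=vx$ \emph{exactly}, which forces the $L^2$-approximation of $\eta_{j_0}$ by a bounded element together with the polar-decomposition maneuver and careful bookkeeping of the corners $p_0,q_0,f_j$. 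A secondary but genuine subtlety is the bounded-multiplicity reduction in the $(1)\Rightarrow(2)$ direction, which is precisely what is needed when $p_0$ has infinite relative multiplicity in $A$ (e.g.\ $A=\bigoplus_k M_{n_k}$, $n_k\to\infty$) so that the family $\{u_i\}$ can be kept finite.
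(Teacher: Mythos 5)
There is nothing to compare against inside the paper: the statement is quoted verbatim from \cite{Po03} (Popa's intertwining-by-bimodules theorem) and the paper gives no proof, only the citation. What you wrote is, in outline, the standard proof of the cited theorem: pass to the basic construction $\langle M,e_B\rangle$, average $S_F=\sum_{x\in F}xe_Bx^*$ over $\mathcal U(A)$, take the unique element of minimal $\|\cdot\|_{2,\hat\tau}$-norm in the weakly closed convex hull to get a nonzero positive $d\in A'\cap\langle M,e_B\rangle$ with $\hat\tau(d)<\infty$, cut to a spectral projection $f$, and read off an $A$-$B$-bimodule of finite right dimension; conversely, use the intertwining relation to bound $\sum\|\mathrm E_B(\cdot)\|_2^2$ below uniformly over $\mathcal U(A)$. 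Your $(1)\Rightarrow(2)$ half is correct and essentially complete: the bounded-multiplicity reduction (cutting by a spectral projection of the center-valued trace of $p_0$ so that $z'\preceq Np_0'$ while keeping $vp_0'\neq 0$, and using centrality of $z'$ to cut down a hypothetical net) is exactly the right fix for $\theta$ being defined only on a corner, and your displayed identity checks out, since $\sum_i u_ip_0'u_i^*=z'$ and $\mathrm E_B(v'v'^*)=\theta(p_0')\mathrm E_B(v'v'^*)\theta(p_0')$. The soft reformulation via the sets $\mathcal C_F$ is also fine, though the ``one checks'' hides the small standard computation showing $\langle d_F,S_F\rangle_{\hat\tau}=\|d_F\|_{2,\hat\tau}^2$, which a net witnessing the failure of (2) forces to vanish.

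In the $(2)\Rightarrow(1)$ direction, however, there is a genuine gap, and it sits exactly where you admit you would ``be most careful''. Two concrete points. (i) Finite right $B$-dimension of $\mathcal H=fL^2(M)$ does not by itself yield a finite right-$B$ orthonormal system spanning $\mathcal H$: the local multiplicity of $\mathcal H$ over the center of $B$ can be unbounded even though $\hat\tau(f)<\infty$ (e.g.\ multiplicity $k$ over a central atom of trace $2^{-k}$). One must first cut $\mathcal H$ by a suitable central projection $z\in Z(B)$ on which the center-valued dimension is bounded; this cut is legitimate because $JzJ$ commutes both with $f\in(JBJ)'$ and with the left $A$-action, so $fJzJ$ is again a nonzero finite-trace projection in $A'\cap\langle M,e_B\rangle$, and only then do the Pimsner--Popa basis $\eta_1,\dots,\eta_n$ and the homomorphism $\psi$ into $\mathbf f M_n(B)\mathbf f$ exist. (ii) The decisive step --- replacing the $L^2$-vector $\eta_{j_0}$ by a bounded element of $M$ and running the ``polar-decomposition/perturbation argument'' to produce $v\in q_0Mp_0$ and $\theta$ with $\theta(x)v=vx$ exactly, landing in a corner of $B$ itself rather than of $M_n(B)$ --- is the actual technical content of the theorem, and in your write-up it is named but not carried out. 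So the attempt correctly reproduces the architecture of Popa's argument but stops one (hard) step short of a proof; as a minor remark, the $q_0Qq_0$ in the statement is a typo for $q_0Bq_0$, which is how you correctly read it.
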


If (1) or (2) hold true,  we write $A\prec_{M}B$ and say that {\it a corner of $A$ embeds into $B$ inside $M$}.
If $Ap'\prec_{M}B$, for any nonzero projection $p'\in A\cap pMp$, we write $A\prec^{\text f}_{M}B$.

\subsection{Singular MASAs}
Let $(M,\tau)$ be a tracial von Neumann algebra. An abelian von Neumann subalgebra $A\subset M$ is called a {\it MASA} if it is maximal abelian and {\it singular} if it satisfies $\mathcal N_M(A)=\mathcal U(A)$ \cite{Dixmier54}. Note that a singular abelian von Neumann subalgebra $A\subset M$ is automatically a MASA.  

Two MASAs $A\subset pMp,B\subset qMq$ are called {\it disjoint} if $A\nprec_MB$.
The following result from \cite[Theorem A.1]{Po01} shows that disjointness for MASAs is the same as having no unitarily conjugated corners. In particular, disjointness of MASAs is a symmetric relation.

\begin{thm}[\cite{Po01}]\label{masa}
    Let $(M,\tau)$ be a tracial von Neumann algebra and $A\subset pMp,B\subset qMq$ be MASAs. Then $A\prec_MB$ if and only if $B\prec_MA$ and if and only if there exist nonzero projections $p_0\in A,q_0\in B$  such that $u(Ap_0)u^*=Bq_0$, for some $u\in\mathcal U(M)$.
\end{thm}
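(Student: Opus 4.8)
The statement has three pieces: the equivalence $A\prec_M B \iff B\prec_M A$ for MASAs, and the equivalence of either with the existence of unitarily conjugate corners $u(Ap_0)u^* = Bq_0$. The nontrivial direction is to upgrade an abstract intertwining (a partial isometry $v$ and a $*$-homomorphism $\theta$) into a genuine unitary conjugacy of corners. Here I would exploit heavily that both $A$ and $B$ are \emph{maximal abelian}, which is exactly what forces the intertwiner to be, after adjustment, a unitary between corners.

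\textbf{Step 1: From intertwining to a partial isometry with full left support.} Assume $A\prec_M B$. By Theorem \ref{intertwining} there are nonzero projections $p_0\in A$, $q_0\in B$, a $*$-homomorphism $\theta\colon p_0Ap_0\to q_0Bq_0$, and a nonzero partial isometry $v\in q_0Mp_0$ with $\theta(x)v=vx$ for all $x\in p_0Ap_0$. Replacing $p_0$ by $vv^*\in (p_0Ap_0)'\cap p_0Mp_0$... more carefully: from $\theta(x)v = vx$ one gets $v^*v\in (p_0Ap_0)'\cap p_0Mp_0$ and $vv^*\in \theta(p_0Ap_0)'\cap q_0Mq_0$. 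Since $A$ is a MASA in $M$, $(p_0Ap_0)'\cap p_0Mp_0 = p_0Ap_0$, so $v^*v$ is a projection in $Ap_0\le A$; shrink $p_0$ to this projection so that $v^*v=p_0$. Then $e:=vv^*$ is a projection in $\theta(p_0Ap_0)'\cap q_0Mq_0$, and $x\mapsto vxv^*$ is a normal $*$-isomorphism of $p_0Ap_0$ onto $\theta(p_0Ap_0)e\subset eMe$, carrying the abelian algebra $p_0Ap_0$ onto a copy inside $eMe$.

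\textbf{Step 2: Maximality of $B$ forces $e\in B$ and the corner to fill out.} The image $vBv$... let me instead argue as in \cite{Po01}: the algebra $N:=(v^*Bv)'\cap p_0Mp_0$ contains $p_0Ap_0$ (because $v$ intertwines $A$ into $B$ so $v^*(q_0Bq_0)v$ commutes with... ) — the point is to produce, using that $A$ is a MASA, that $v^*\theta(p_0Ap_0)''v$-closure is all of $p_0Ap_0$, and symmetrically using that $B$ is a MASA, that $e=vv^*\in q_0Bq_0$ after possibly shrinking $q_0$. Concretely: $e$ commutes with $\theta(p_0Ap_0)$; one shows $e\in \{\theta(p_0Ap_0)\}''\subset q_0Bq_0$ is not automatic, but by spectral cutting one may replace $q_0$ by a subprojection of $B$ under $e$ and conversely arrange $vv^*\in B$. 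Then $v$ implements a unitary (in $\mathcal U(M)$, after extending by a partial isometry between the orthocomplements $p-p_0$ and $q-q_0$-sized projections, which exist since $M$ is a II$_1$ factor) conjugating $Ap_0'$ onto $Bq_0'$ for suitable subprojections. This is the technical heart and the place I expect the real work: matching the two MASAs requires alternately using maximality on both sides, and the bookkeeping with projections and supports is where \cite[Theorem A.1]{Po01} does its careful argument.

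\textbf{Step 3: Symmetry and conclusion.} Once we have $u\in\mathcal U(M)$ and nonzero $p_0\in A$, $q_0\in B$ with $u(Ap_0)u^*=Bq_0$, the implication to $B\prec_M A$ is immediate: take $w=u^*q_0\in p_0 M q_0$ (a partial isometry) and $\theta'=\Ad(u^*)\colon Bq_0\to Ap_0$; then $\theta'(y)w = wy$, so condition (1) of Theorem \ref{intertwining} holds with the roles swapped, giving $B\prec_M A$. Running Steps 1--2 starting from $B\prec_M A$ gives back unitarily conjugate corners, closing the cycle of equivalences. The main obstacle is genuinely Step 2 — turning the one-sided $*$-homomorphism into a two-sided isomorphism of corners of MASAs — and for that I would follow the strategy of \cite[Theorem A.1]{Po01} rather than reinvent it: exploit that for a MASA $A$, any $*$-isomorphism of a corner $p_0Ap_0$ into $M$ whose image generates a MASA in its corner must actually be spatially implemented, because the relative commutant condition pins down both the domain and the range.
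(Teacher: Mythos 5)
The paper does not actually prove Theorem \ref{masa}; it is quoted from \cite[Theorem A.1]{Po01}, so your attempt has to be measured against that standard argument. Your Steps 1 and 3 are fine (shrinking $p_0$ so that $v^*v\in Ap_0$ using that $A$ is a MASA, and the easy implication from unitary conjugacy of corners back to intertwining). The problem is that Step 2 --- which is the entire content of the theorem --- is not carried out: you explicitly defer to ``the strategy of \cite[Theorem A.1]{Po01}'', and the one concrete maneuver you do propose, namely that ``by spectral cutting one may replace $q_0$ by a subprojection of $B$ under $e$ and \dots arrange $vv^*\in B$'', does not work as stated. The projection $e=vv^*$ is only known to commute with $\theta(Ap_0)$, and $\theta(Ap_0)$ is in general a small, far from maximal abelian, subalgebra of $Bq_0$; its relative commutant $N:=\theta(Ap_0)'\cap q_0Mq_0$ is typically large and non-abelian, and there is no reason that $e$, or any nonzero subprojection of $e$, lies in $B$. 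Shrinking $q_0$ inside $B$ cannot force this, so the claimed reduction is exactly the gap. The actual argument runs differently: after arranging $v^*v=p_0\in A$, one observes that $vAp_0v^*=\theta(Ap_0)e$ is a MASA in $eMe$ (maximal abelianness transports under $\Ad(v)$, using that $A$ is a MASA), hence $eNe=\theta(Ap_0)e$ is \emph{abelian}; on the other hand $Bq_0\subset N$ and $Bq_0$ is maximal abelian in $N$ (this is where maximality of $B$ enters). By comparison inside $N$ one finds a partial isometry $w\in N$ with $d:=w^*w$ a nonzero projection of $Bq_0$ and $ww^*\leq e$; since $Bq_0d$ is maximal abelian in $dNd$ and $ww^*Nww^*=\theta(Ap_0)ww^*$ is abelian, one gets $w(Bq_0d)w^*=\theta(Ap_0)ww^*=vAp_0v^*\,ww^*$, and then $v^*w$ is a partial isometry conjugating $Bd$ onto a corner $Aa$ of $A$. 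None of this is visible in your sketch, and your alternative slogan (``any $*$-isomorphism of a corner of a MASA whose image generates a MASA must be spatially implemented'') is not a usable substitute for it.

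A secondary point: to upgrade the conjugating partial isometry to a unitary you invoke ``since $M$ is a II$_1$ factor'', but the theorem is stated for an arbitrary tracial von Neumann algebra. Factoriality is not needed and not available; what one uses is finiteness, namely that equivalent projections in a finite von Neumann algebra have equivalent complements, so the partial isometry $v^*w$ above extends to a unitary of $M$ conjugating $Aa$ onto $Bd$. This is easily repaired, but the missing Step 2 is a genuine gap: as written, the proposal proves only the formal bookkeeping around the theorem and outsources its core to the very reference it is supposed to replace.
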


\subsection{The singular abelian core} We are now ready to give the following:

\begin{defn}\label{folded}
    Let $(M,\tau)$ be a tracial von Neumann algebra. We denote by $\mathcal S(M)$ the set of all families $\mathcal A=\{A_i\}_{i\in I}$, where $p_i \in M$ is a projection, $A_i\subset p_iMp_i$ is a singular MASA, for every $i\in I$, and $A_i,A_{i'}$ are disjoint, for every $i,i'\in I$ with $i\not=i'$. We denote $\text{d}(\mathcal A)=\sum_{i\in I}\tau(p_i)$, the {\em size} of the family $\cA$.
    Given $\mathcal A=\{A_i\}_{i\in I},\mathcal B=\{B_j\}_{j\in J}\in\mathcal S(M)$ we write $\mathcal A\leq \mathcal B$ if for every $i\in I$ and nonzero projection $p\in A_i$, there exists $j\in J$ such that $A_ip\prec_MB_j$. We say that $\mathcal A$ and $\mathcal B$ are {\it equivalent} and write $\mathcal A\sim \mathcal B$ if $\mathcal A\leq\mathcal B$ and $\mathcal B\leq\mathcal A$.
\end{defn}

\begin{lem}\label{max}
Let $(M,\tau)$ be a tracial von Neumann algebra. Then $\mathcal S(M)$ admits a maximal element with respect $\leq$. Moreover, any two maximal elements of $\mathcal S(M)$ with respect to $\leq$ are equivalent.
\end{lem}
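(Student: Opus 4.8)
The plan is to apply Zorn's lemma for existence and a direct "amalgamation" argument for uniqueness. For existence, I would first observe that $\mathcal S(M)$ is nonempty: the empty family trivially belongs to it, and moreover any singular MASA $A \subset pMp$ (which exists, e.g. by taking a Haar unitary generated abelian subalgebra of a copy of $L\mathbb F_2$ inside a corner, or simply $\{ \emptyset\}$ suffices to start). I would then verify that $\leq$ is a preorder on $\mathcal S(M)$ (reflexivity and transitivity; transitivity uses that if $A_i p \prec_M B_j$ then for a nonzero projection the intertwining passes down, and then $B_j q \prec_M C_k$ for the relevant corner — here one needs the standard fact that $\prec_M$ behaves well under cutting by projections in the target, which follows from Theorem \ref{masa} since all algebras involved are MASAs). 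The key point is that a chain $\{\mathcal A^{(\lambda)}\}$ in $(\mathcal S(M), \leq)$ has an upper bound: here I cannot simply take unions since the index sets and projections vary, so instead I would use the size function $\mathrm{d}(\cdot)$ together with a maximality-of-size argument, or more carefully, note that the relation $\sim$ has at most "one" maximal $\sim$-class and argue that $\sup_\lambda \mathrm{d}(\mathcal A^{(\lambda)}) \le 1$ (or $\le \tau(1)$, which is finite), so one can extract a family achieving the supremum; I would build the upper bound by transfinite recursion, at successor stages adjoining to the current family a new singular MASA $A$ (supported on a projection disjoint, in the $\prec_M$ sense, from everything so far) whenever one exists, and taking "directed unions with disjointification" at limit stages. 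The process must terminate because the total size is bounded by $\tau(1) < \infty$; a standard measure-exhaustion argument (the sizes of successively adjoined pieces, being positive, can only sum to something finite) handles the limit ordinals.

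For uniqueness, suppose $\mathcal A = \{A_i\}_{i \in I}$ and $\mathcal B = \{B_j\}_{j \in J}$ are both maximal with respect to $\leq$. I want to show $\mathcal A \leq \mathcal B$ (and then by symmetry $\mathcal B \leq \mathcal A$, giving $\mathcal A \sim \mathcal B$). Fix $i \in I$ and a nonzero projection $p \in A_i$. Consider the set of projections $p' \in A_i$ with $p' \le p$ such that $A_i p' \prec_M B_j$ for some $j \in J$; I claim there is a maximal such projection $q \le p$ (via Zorn or via a maximal disjoint family of such sub-projections, using that $\prec_M$ is "countably additive" in the source in the sense that a countable sum of such projections still works), and I want $q = p$. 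If $q < p$, then $A_i(p-q) \nprec_M B_j$ for every $j$; but then the family $\mathcal B \cup \{A_i(p-q)\}$, suitably viewed — here one must check $A_i(p-q)$ is still a singular MASA in $(p-q)M(p-q)$, which follows since $A_i$ is a singular MASA in $p_i M p_i$ and cutting a singular MASA by one of its own projections yields a singular MASA in the corner (singularity of a corner of a singular MASA is standard) — lies in $\mathcal S(M)$ and is strictly larger than $\mathcal B$ under $\leq$, contradicting maximality of $\mathcal B$. Hence $q = p$, and (aggregating over a partition of $p$ into countably many pieces, each mapping into some $B_j$) we get that every nonzero subprojection of $p$ has a further corner of $A_i$ embedding into some $B_j$; since the definition of $\leq$ only requires this for the given $p$ (every nonzero projection $p \in A_i$), we are done.

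The main obstacle I anticipate is the \textbf{limit-stage / Zorn step for existence}: $\mathcal S(M)$ is not closed under naive unions because the constituent MASAs live on different, possibly overlapping corners, and $\leq$ is merely a preorder (not a partial order), so one cannot directly quote Zorn's lemma on $(\mathcal S(M), \leq)$. The fix is to pass to the quotient poset $\mathcal S(M)/\!\sim$, or better, to run a greedy transfinite construction exhausting "sans-mass": start from $\emptyset$, and at each stage, if the current family $\mathcal A$ is not maximal, there is (by definition of $\leq$ failing) some singular MASA $A \subset pMp$ with $Ap' \nprec_M B_j$ for all $j$ and all nonzero $p' \le p$; adjoin it. The positive quantities $\tau(p)$ adjoined sum to at most $\tau(1_M) = 1$, forcing termination at a countable ordinal, and the resulting family is maximal. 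A secondary technical point worth isolating as a sub-lemma is the "countable additivity of $\prec_M$ in the source": if $\{p_n\}$ are orthogonal projections in $A_i$ with $\sum_n p_n = p$ and $A_i p_n \prec_M B_{j_n}$ for each $n$, one still gets the required behavior of $A_i p$ under $\leq$ — but this is immediate from the definition of $\leq$, which quantifies over \emph{all} nonzero subprojections, so in fact no such sub-lemma is needed and the argument stays clean. One should double-check, using Theorem \ref{masa}, that the disjointness relation among the adjoined pieces and the existing family is genuinely symmetric, so that the enlarged family indeed lies in $\mathcal S(M)$.
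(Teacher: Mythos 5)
The existence half of your argument rests on a false premise. You justify termination of the transfinite ``greedy'' construction (and your bound on chains) by asserting that the sizes of the successively adjoined pieces can only sum to at most $\tau(1)$, so that a ``measure-exhaustion'' argument forces the process to stop at a countable ordinal, and you likewise claim $\sup_\lambda \mathrm{d}(\mathcal A^{(\lambda)})\le \tau(1)$. But members of a family in $\mathcal S(M)$ are required to be disjoint only in the intertwining sense ($A_i\nprec_M A_{i'}$), not to have orthogonal support projections: two disjoint singular MASAs can both have support projection $1$. Hence $\mathrm{d}(\mathcal A)=\sum_i\tau(p_i)$ is not bounded by $\tau(1)$; indeed Remark \ref{d_M} records that every separable II$_1$ factor contains a continuum of pairwise disjoint singular MASAs, so $\mathrm{r}(M)=\frak c$ and maximal families are typically uncountable. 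There is therefore no measure exhaustion and no termination at a countable ordinal. (The recursion could be made to terminate for the purely set-theoretic reason that each step adjoins a new element of the fixed set of all singular MASAs of corners of $M$, but that is not the argument you gave.) Moreover, the obstacle you set out to circumvent is not actually there: the paper simply orders families by \emph{inclusion}. The union of an inclusion-chain of families of pairwise disjoint singular MASAs is again such a family, because both conditions (each member is a singular MASA; any two members are disjoint) involve at most two members at a time and any pair lies in some member of the chain. So Zorn's lemma gives an inclusion-maximal $\mathcal A\in\mathcal S(M)$ directly, and one then checks that such an $\mathcal A$ satisfies $\mathcal B\le\mathcal A$ for \emph{every} $\mathcal B\in\mathcal S(M)$ (hence is $\le$-maximal) by the ``adjoin the witnessing corner $B_jq$'' device.

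Your uniqueness argument contains the right idea (and it is the same device as the paper's), but the detour through a maximal projection $q\le p$ is both unnecessary and incorrectly justified: maximality of $q$ among projections $p'\le p$ with $A_ip'\prec_M B_j$ for \emph{some} $j$ does not imply $A_i(p-q)\nprec_M B_j$ for all $j$, since nothing forces $q+(p-q)$ to intertwine into a single $B_j$. Argue directly instead: if $\mathcal A\not\le\mathcal B$, pick $i$ and a nonzero projection $p\in A_i$ with $A_ip\nprec_M B_j$ for all $j$; then $A_ip\subset pMp$ is a singular MASA disjoint from every $B_j$, so $\mathcal B\cup\{A_ip\}\in\mathcal S(M)$, and it lies strictly above $\mathcal B$ for $\le$ (it dominates $\mathcal B$ trivially, but is not dominated by $\mathcal B$ because of the projection $p$), contradicting $\le$-maximality of $\mathcal B$. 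With the existence step repaired as above, this also yields the ``moreover'' assertion, since any two $\le$-maximal elements then dominate each other.
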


\begin{proof}
    Let $\mathcal A=\{A_i\}_{i\in I}\in\mathcal S(M)$ be a maximal family with respect to inclusion. Then $\mathcal A$ is maximal with respect to $\leq$. To see this, let $\mathcal B=\{B_j\}_{j\in J}\in\mathcal S(M)$. If $\mathcal B\not\leq A$, then there are $j\in J$ and a nonzero projection $q\in B_j$ with $B_jq\nprec_MA_i$, for every $i\in I$. As $B_jq\subset qMq$ is a singular MASA, we get that $\mathcal A\cup\{B_jq\}\in\mathcal S(M)$, contradicting the maximality of $\mathcal A$ with respect to inclusion. The moreover assertion follows.
\end{proof}

\begin{defn}
    Let $(M,\tau)$ be a tracial von Neumann algebra. We denote by $\mathcal A_M$ the equivalence class consisting of all maximal elements of $\mathcal S(M)$ with respect to $\leq$, and call it the {\it singular abelian core} of $M$. We define the {\it rank} $\text{r}(M)$ of $M$ as the size, $\text{d}(\mathcal A)$, of  any $\mathcal A\in\mathcal A_M$. 
Note that $\text{r}(M)$ is a well-defined isomorphism invariant of $M$ since the map $\mathcal A\mapsto\text{d}(\mathcal A)$ is constant on equivalence classes.
    \end{defn}

    \begin{rem}\label{unfolded}
Definition \ref{folded} presents  the {\it folded form} of $\mathcal S(M)$, for a tracial von Neumann algebra $(M,\tau)$. Let $K$ be a large enough set, which contains the index set $I$ of any element $\mathcal A=\{A_i\}_{i\in I}$ of $\mathcal S(M)$. For instance, take $K$ to be the collection of all singular MASAs $A\subset pMp$, for all projections $p\in M$. We identify every  $\mathcal A=\{A_i\}_{i\in I}$ of $\mathcal S(M)$ with the singular abelian von Neumann subalgebra $\mathcal A=\oplus_{i\in I}A_i$  of $p\mathcal Mp$, where $\mathcal M=M\overline{\otimes}\mathbb B(\ell^2K)$ and $p=\oplus_{i\in I}p_i\in\mathcal M$.
This is the {\it unfolded form} of $\mathcal S(M)$. 
In this unfolded form, given $\mathcal A,\mathcal B\in\mathcal S(M)$, we have that $\mathcal A\leq\mathcal B$ (respectively, $\mathcal A\sim\mathcal B$) if and only if $\mathcal A\subset u\mathcal Bqu^*$ (respectively, $\mathcal A=u\mathcal Bu^*$), for a projection $q\in\mathcal B$ and unitary $u\in \mathcal M$. 

The unfolded form of the singular abelian core $\mathcal A_M$ of $M$ is then the unique (up to unitary conjugacy) singular abelian von Neumann subalgebra $\mathcal A\subset p\mathcal Mp$ generated by finite projections such that for any singular abelian von Neumann subalgebra $\mathcal B\subset q\mathcal Mq$, for a finite projection $q$, we have that $\mathcal B\prec_{\mathcal M}\mathcal A$. The rank $\text{r}(M)$ is then equal to  the semifinite trace, $(\tau\otimes\text{Tr})(p)$, of the unit  $p$ of $\mathcal A_M$. 
Notice that if the semifinite trace $(\tau\otimes\text{Tr})(p)$ of the support of $\mathcal A$ is infinite, then it can be viewed as a cardinality $\leq |K|$. We will in fact view  $\text{r}(M)$ this way, when infinite. 
 
\end{rem}

 \begin{rem}\label{d_M}  Let $M$ be an arbitrary separable II$_1$ factor.
       By a result in \cite{Po83},  $M$ admits a singular MASA. This result was strengthened  in \cite[Theorem 1.1]{Po16} where it was shown that $M$ contains an uncountable family of pairwise disjoint singular MASAs. Consequently, $\text{r}(M)> \aleph_0$. More recently, it was shown in  \cite[Theorem 1.1]{Sorin_Duke} that $M$ contains a copy of the hyperfinite II$_1$ factor $R\subset M$ which is coarse, i.e., such the $R$-bimodule $\text{L}^2(M)\ominus\text{L}^2(R)$ is a multiple of the coarse $R$-bimodule $\text{L}^2(R)\overline{\otimes}\text{L}^2(R)$. In combination with \cite[Proposition 2.6.3]{Sorin_Duke} and  \cite[Theorem 5.1.1]{SorinKS}, this implies that $M$ has 
       a continuous family of disjoint singular MASAs. Since the set of distinct self-adjoint elements in a separable II$_1$ factor has continuous cardinality $\frak c=2^{\aleph_0}$ and each singular MASA is generated by a self-adjoint element, it follows that $\text{r}(M)=\frak c$, for every separable II$_1$ factor $M$.

\end{rem}

\subsection{The singular abelian non-separable core} Remark \ref{d_M} shows that  the rank $\text{r}(M)$ is equal to the continuous cardinality $\frak c$ for any separable II$_1$ factor $M$, and thus cannot be used to distinguish such factors up to isomorphism. In contrast, we define in this section a non-separable analogue of $\text{r}(M)$, which will later enable us to prove the non-isomorphisms asserted by Theorem \ref{main}. 

\begin{defn}
    Let $(M,\tau)$ be a tracial von Neumann algebra. We say that a  von Neumann subalgebra $A\subset pMp$ is a {\it sans}-subalgebra of $M$ if it is singular abelian in $pMp$ and purely non-separable.  We denote by $\mathcal S_{\text{ns}}(M)\subset\mathcal S(M)$ the set of $\mathcal A=\{A_i\}_{i\in I}\in\mathcal S(M)$ such that $A_i$ is a sans-subalgebra, for every $i\in I$. We call any $\mathcal A\in\cS_{\text{ns}}(M)$ a {\em sans family} in $M$.
\end{defn}

Since Lemma \ref{max} trivially holds true if we replace $\mathcal S(M)$ by $\mathcal S_{\text{ns}}(M)$, we can further define:

\begin{defn}
    Let $(M,\tau)$ be a tracial von Neumann algebra. We denote by $\mathcal A_M^{\text{ns}}$ the equivalence class consisting of all maximal elements of $\mathcal S_{\text{ns}}(M)$ with respect to $\leq$, and call it the  {\it singular abelian non-separable core} (abbreviated, the {\it sans-core}) of $M$. We define the {\it sans-rank} $\text{r}_{\text{ns}}(M)$ of $M$ as the size, $\text{d}(\mathcal A)$, of  any $\mathcal A\in\mathcal A_M^{\text{ns}}$.
    \end{defn}

\begin{rem}\label{unfolded2} Like in Remark \ref{unfolded}, consider  $\mathcal M=M\overline{\otimes}\mathbb B(\ell^2K)$, for a large enough set $K$.
In the unfolded form of $\mathcal S_{\text{ns}}(M)$, the sans-core $\mathcal A_M^{\text{ns}}$ of $M$ is the unique (up to unitary conjugacy) sans-subalgebra $\mathcal A\subset p\mathcal Mp$ generated by finite projections such that for any sans-subalgebra $\mathcal B\subset q\mathcal Mq$, for a finite projection $q$, we have that $\mathcal B\prec_{\mathcal M}\mathcal A$. The sans-rank $\text{r}_{\text{ns}}(M)$ is then  the semifinite trace, $(\tau\otimes\text{Tr})(p)$, of the unit  $p$ of $\mathcal A_M^{\text{ns}}$. Like in Remark 2.6, when the semifinite trace of the support of the sans-core in this unfolded form is infinite, then we will view $\text{r}_{\text{ns}}(M)$ as a  cardinality $\leq |K|$. 
    \end{rem}

\begin{rem} 
    If $M$ is a separable II$_1$ factor, then we clearly have $\text{r}_{\text{ns}}(M)=0$. If $A\subset M$ is a singular MASA and $\omega$ is a free ultrafilter on $\mathbb N$, then $A^{\omega}\subset M^\omega$ is a purely non-separable singular MASA, see \cite[5.3]{Po83}. Moreover, disjoint MASAs in $M$ give rise to disjoint ultrapower MASAs in $M^\omega$.  
   By using these facts and results from \cite{SorinKS,Sorin_Duke} as in Remark \ref{d_M} we get that $\text{r}_{\text{ns}}(M^\omega)\geq \frak c$, for every separable II$_1$ factor $M$. But getting $\text{r}_{\text{ns}}(M^\omega)\leq \frak c$ is problematic, as besides the family of disjoint ultraproduct singular MASAs in $M^\omega$, which has cardinality $\frak c$, one may have singular MASAs that are not of this form. 
\end{rem}

The expression of $\text{r}_{\text{ns}}(M)$ as the semifinite trace of the support of the sans-core in unfolded form, as in Remark 2.10, implies the following scaling formula for $\text{r}_{\text{ns}}(M)$. We include below an alternative short proof using the folded form of $\mathcal S_{\text{ns}}(M)$.

\begin{prop}\label{fundamental group}
\label{main3}
Let $M$ be any II$_1$ factor and $t \in \R_+^*$. 
Then we have
\[\nsrk(M^t) = \nsrk(M)/t.\]
In particular, if $0 < \nsrk(M) < \infty$, then $M$ has trivial fundamental group, $\mathcal F(M)=\{1\}$.
\end{prop}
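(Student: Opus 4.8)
The plan is to reduce the scaling formula to a statement about the unfolded form of the sans-core, which was already identified in Remark \ref{unfolded2}, and then invoke the functorial behavior of amplifications under the ambient semifinite algebra. Recall that for a II$_1$ factor $M$ and $t>0$, one realizes $M^t = p(M\overline{\otimes}\mathbb B(\ell^2))p$ for a projection $p$ with $(\tau\otimes\Tr)(p)=t$; more conveniently, one may work with a fixed ambient semifinite factor $\mathcal M = M\overline{\otimes}\mathbb B(\ell^2 K)$ and note that all amplifications $M^t$, $t>0$, appear as corners $q\mathcal Mq$ for finite projections $q\in\mathcal M$, and that the semifinite trace is intrinsic. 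A sans-subalgebra of $M^t = q\mathcal Mq$ sitting inside a finite corner of $(M^t)\overline{\otimes}\mathbb B(\ell^2)$ is simply a sans-subalgebra of $\mathcal M$ supported under a finite projection, and intertwining $\prec$ inside $M^t\overline{\otimes}\mathbb B(\ell^2)$ agrees with $\prec_{\mathcal M}$ (the characterization in Theorem \ref{intertwining} only involves nets of unitaries and conditional expectations, both insensitive to which finite corner one passes through). Hence the sans-core of $M^t$, in unfolded form, is \emph{the same} subalgebra $\mathcal A\subset p\mathcal Mp$ as the sans-core of $M$, with the same support projection $p$. The sans-rank is $(\tau\otimes\Tr)(p)$ computed with respect to the semifinite trace \emph{normalized so that the unit of the II$_1$ factor has trace $1$}: for $M$ that trace is $\tau\otimes\Tr$, while for $M^t = q\mathcal Mq$ it is $\tfrac1t(\tau\otimes\Tr)$, since the identity of $M^t$ is $q$ with $(\tau\otimes\Tr)(q)=t$. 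Therefore $\nsrk(M^t) = \tfrac1t(\tau\otimes\Tr)(p) = \nsrk(M)/t$.

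Alternatively, and perhaps cleaner for an ``alternative short proof'' using the folded form: fix $t\in(0,1]$ (the case $t>1$ follows by replacing $M$ with $M^{t}$ and using $(M^t)^{1/t}=M$). Write $M^t = pMp$ for a projection $p\in M$ with $\tau(p)=t$. Given a sans family $\cA = \{A_i\}_{i\in I}\in\mathcal S_{\text{ns}}(M^t)$, with $A_i\subset p_iMp_i$ and $p_i\le p$, one checks that $\cA$ is also a sans family in $M$: singularity, pure non-separability and abelianness are all relative to corners $p_iMp_i$ which are unchanged, and disjointness $A_i\nprec_{pMp}A_j$ is equivalent to $A_i\nprec_M A_j$ because intertwining of subalgebras of $pMp$ inside $pMp$ versus inside $M$ coincide (the partial isometry in Theorem \ref{intertwining}(1) can be taken with source and range projections under the relevant units; more directly, condition (2) reads the same). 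Thus $\mathcal S_{\text{ns}}(M^t)\subset\mathcal S_{\text{ns}}(M)$, and the inclusion preserves both the order $\le$ and the size function $\text{d}$. A maximal element $\cA$ of $\mathcal S_{\text{ns}}(M^t)$ is then a maximal element of $\{\cB\in\mathcal S_{\text{ns}}(M): \text{supp}(\cB)\le p\}$; cutting down any maximal element of $\mathcal S_{\text{ns}}(M)$ by $p$ (using that each $B_jp$ is again a sans-subalgebra whenever it is nonzero, and disjointness passes to corners) produces such a family, and one shows by the usual maximality-and-exhaustion argument that its size is exactly $\tau(p)\cdot\text{d}(\cA_M^{\text{ns}})$ relative to $M$ — equivalently, $t\cdot\nsrk(M)$ — but this size is computed in $M$ where the unit has trace $1$, whereas $\nsrk(M^t)$ is this same size renormalized so the unit $p$ of $M^t$ has trace $1$, giving the extra factor $1/t$. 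The identity $\nsrk(M^t)=\nsrk(M)/t$ follows.

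For the ``in particular'': if $0<\nsrk(M)<\infty$ and $t\in\mathcal F(M)$, then $M^t\cong M$, so $\nsrk(M^t)=\nsrk(M)$ by isomorphism-invariance of the sans-rank; combined with $\nsrk(M^t)=\nsrk(M)/t$ and $0<\nsrk(M)<\infty$, this forces $t=1$. Hence $\mathcal F(M)=\{1\}$.

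The main obstacle I anticipate is the careful bookkeeping of which semifinite trace one uses to \emph{measure} the sans-rank: the size $\text{d}$ of a sans family is a sum of $\tau$-traces of projections in $M$, and when one reinterprets a family in $M^t$ as a family in $M$ one must keep track of the fact that the ``unit of the factor'' has trace $1$ in $M$ but the cut-down projection has trace $t$, which is exactly where the factor $1/t$ enters. One also needs the lemma — immediate from Theorem \ref{intertwining}(2) but worth stating — that for $A,B\subset pMp$ one has $A\prec_{pMp}B \iff A\prec_M B$, and the analogous stability of the sans conditions under cutting by projections of $M$ that lie in $M$ (not merely in the corner); for the latter, pure non-separability of $Bq$ for $q\in B$ nonzero, and singularity of $Bq$ in $qMq$, both follow routinely from the corresponding properties of $B$ in $pMp$. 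None of these is deep, but getting the normalization exactly right is the crux.
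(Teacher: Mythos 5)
Your first (unfolded-form) argument is essentially the justification the paper itself gives in the remark preceding the proposition: the sans-core in unfolded form is intrinsic to the stable algebra $M\overline{\otimes}\mathbb B(\ell^2K)$, and the factor $1/t$ comes from renormalizing the semifinite trace so that the unit of $M^t$ has trace $1$. That part is fine, and at the same level of rigor as the paper's own statement of it.

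Your second, ``alternative'' folded-form argument -- the one corresponding to the paper's actual written proof -- has a genuine gap. The operation ``cut down a maximal element $\{B_j\}_j$ of $\cS_{\text{ns}}(M)$ by $p$'' is not well defined: the supports $q_j$ of the $B_j$ are arbitrary projections of $M$, and $p$ need not commute with $B_j$ (nor be comparable to $q_j$), so ``$B_jp$'' is not a subalgebra. What the paper uses instead is that, by factoriality of $M$, any sans family is \emph{equivalent} to one whose supports lie under $p$: cut each $B_j$ along its own projections into pieces of trace at most $\tau(p)$ and conjugate each piece under $p$ by a unitary of $M$. Since equivalent families have the same size, the resulting family has $M$-size equal to $\text{d}(\cA_M^{\text{ns}})=\nsrk(M)$, \emph{not} $\tau(p)\cdot\text{d}(\cA_M^{\text{ns}})$ as you claim; note also that your earlier assertion that the inclusion $\cS_{\text{ns}}(M^t)\subset\cS_{\text{ns}}(M)$ ``preserves the size function'' already ignores the change of normalization. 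Indeed, if you run your own numbers to the end you get $\nsrk(M^t)=\frac1t\bigl(t\cdot\nsrk(M)\bigr)=\nsrk(M)$, contradicting the formula you set out to prove. The correct bookkeeping is: the maximal family supported under $p$ has $M$-size $\nsrk(M)$, and measuring it with the trace of $pMp$ (i.e.\ $\tau/\tau(p)$) gives $\nsrk(pMp)=\nsrk(M)/\tau(p)$; the whole factor $1/t$ comes from this single renormalization. One also needs a word on why the transported family is maximal in $\cS_{\text{ns}}(pMp)$ (any strictly larger family under $p$ would, viewed in $M$, contradict maximality of $\cA_M^{\text{ns}}$), which your sketch leaves implicit. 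The ``in particular'' deduction of $\mathcal F(M)=\{1\}$ is correct.
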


\begin{proof} It is enough to argue that $\text{r}_{\text{ns}}(qMq)=\text{r}_{\text{ns}}(M)/\tau(q)$, for every nonzero projection $q\in M$.
This follows immediately by using the fact that any $\mathcal A=\{A_i\}_{i\in I}\in\mathcal S(M)$ is equivalent to some $\mathcal B=\{B_j\}_{j\in J}\in\mathcal S(M)$, such that $B_j\subset q_jMq_j$, for some $q_j\leq q$, for every $j\in J$.   
\end{proof}

\section{Main results}\label{mainresults}

\subsection{Main technical result} This subsection is devoted to proving our main technical result. Throughout the subsection we use the following notation. Let $(M_1,\tau_1)$ and $(M_2,\tau_2)$ tracial von Neumann algebras and denote by $M=M_1* M_2$ their free product with its canonical trace $\tau$.

\begin{thm}\label{commutant}
 Let $P\subset pMp$ be a von Neumann subalgebra such that $P'\cap pMp$ is non-separable. Then $P\prec_MM_1$ or $P\prec_MM_2$.   
\end{thm}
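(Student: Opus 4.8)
The plan is to argue by contradiction: assume $P \nprec_M M_1$ and $P \nprec_M M_2$, and derive that $P' \cap pMp$ must be separable. The natural framework is to view $M = M_1 * M_2$ as an amalgamated free product over $\C$ and invoke the structural results of \cite{IPP05} on relative commutants and normalizers in such factors. First I would recall the key consequence of the intertwining machinery for free products: if a subalgebra $Q \subseteq pMp$ satisfies $Q \nprec_M M_i$ for $i = 1, 2$, then one has good control of $Q' \cap pMp$; more precisely, a combination of \cite[Theorem 1.1]{IPP05} (or its refinements concerning quasi-normalizers in AFP algebras) shows that $Q' \cap pMp \prec_M M_1$ and $Q' \cap pMp \prec_M M_2$ simultaneously, or even that $Q' \cap pMp$ sits, after cutting by a projection, inside a conjugate of one of the $M_i$. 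The point is that the ``malnormality'' of each $M_i$ inside $M$ (in the free-product sense) forces any subalgebra with large relative commutant to itself be intertwinable into an $M_i$.

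The main steps, in order, would be: (1) Reduce to the case $p = 1$ up to harmless amplification, or at least fix a nonzero projection and work inside $pMp$, keeping track of how $\prec_M$ behaves under cutting. (2) Apply the \cite{IPP05} control-of-relative-commutants theorem to $Q = P$: since $P \nprec_M M_i$ for both $i$, conclude that there is a nonzero projection $e \in P' \cap pMp$ with $(P' \cap pMp)e \prec_M M_1$ (and similarly for $M_2$), or directly that a corner of $P' \cap pMp$ embeds into some $M_i$. (3) Since $M_i$ are typically separable in the intended application but need not be in general, the real mechanism must instead be: any subalgebra $N \subseteq pMp$ with $N \prec_M M_i$ via a partial isometry $v$ satisfies that $v^* N v$ sits inside a conjugate of $M_i$, and then the relative commutant of $P$ gets squeezed — I would use that $P$ itself has no corner in $M_i$ to show that the part of $P' \cap pMp$ intertwining into $M_i$ must actually centralize a large piece, contradiction with dimension/trace counting unless that piece is trivial. (4) Assemble: if no corner of $P' \cap pMp$ can embed into $M_1$ or $M_2$ in a way compatible with $P \nprec_M M_i$, then $P' \cap pMp$ must be ``small'' in the precise sense needed — and the free-product rigidity upgrades ``small'' to ``separable''.

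The hard part, and where I would spend the most care, is step (3)–(4): extracting separability of $P' \cap pMp$ rather than merely an intertwining statement. The \cite{IPP05} results give intertwining $P' \cap pMp \prec_M M_i$, but translating this into an actual bound on the ``size'' (separability) of $P' \cap pMp$ requires exploiting that $P$ and $P' \cap pMp$ commute, together with the fact that $M_i$'s copy inside $M$ is, in the relevant sense, ``rigid'': a commuting pair $(P, P'\cap pMp)$ with $P \nprec M_i$ forces $P' \cap pMp$ to have a corner unitarily conjugate into $M_i$ \emph{and} that corner must be non-separable to violate the hypothesis — so contrapositively, non-separability of $P' \cap pMp$ yields a non-separable subalgebra of $M$ that commutes with a conjugate of $P$, which by malnormality of $M_i$ in the free product forces $P$ into $M_i$. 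I expect this last implication — that a non-separable commutant of $P$ inside an $M_i$-corner forces $P \prec_M M_i$ — to rely on the mixing/malnormality properties of free products established in \cite{IPP05}, and to be the technical crux of the argument; everything else is bookkeeping with partial isometries, projections, and the equivalences in Theorem \ref{intertwining}.
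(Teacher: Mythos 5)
Your overall strategy (assume $P\nprec_M M_1$ and $P\nprec_M M_2$ and deduce that $P'\cap pMp$ is separable) matches the paper, but the mechanism you propose has a genuine gap. Your step (2) attributes to \cite{IPP05} a statement that is not there and is false in general: from $P\nprec_M M_1$ and $P\nprec_M M_2$ one cannot conclude that a corner of $P'\cap pMp$ embeds into some $M_i$; for instance a ``diagonal'' MASA such as the algebras $A(t)$ of \cite{BP23} satisfies $A(t)=A(t)'\cap M$ and $A(t)\nprec_M M_i$ for both $i$. Theorem 1.1 of \cite{IPP05} controls relative commutants and quasi-normalizers of subalgebras that already sit inside one of the free factors (malnormality of $M_i$ in $M$), so to invoke it you must first locate something inside $M_1$ or $M_2$; your steps (3)--(4) speak of ``squeezing'' and ``malnormality'' but presuppose exactly such a location for $P'\cap pMp$, which is never established. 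Moreover, in the intended application the $M_i$ are non-separable (that is the whole point of the paper), so even a statement of the form $P'\cap pMp\prec_M M_i$ would not by itself contradict non-separability of the commutant; you flag this difficulty yourself but do not resolve it, and no route from ``intertwining'' to ``separable'' is supplied.

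The missing idea is a separable reduction performed on $P$, not on its commutant. Since non-intertwining does not pass to subalgebras, the paper first uses the quantitative characterization of $\prec_M$ in free products via the deformation $\mathrm{m}_\rho$ (Theorem \ref{IPP}, after \cite{IPP05,PV09}): choosing unitaries $u_n\in\mathcal U(P)$ with $\|\mathrm{m}_{1-1/n}(u_n)\|_2\le 1/n$ yields a \emph{separable} subalgebra $Q=\{u_n\}_{n\geq 1}''\subset P$ that still satisfies $Q\nprec_M M_1$ and $Q\nprec_M M_2$ (Corollary \ref{sepsubalg}). Then $Q$ is placed inside a separable free product $N=N_1*N_2$ with $N_i\subset M_i$ separable (Lemma \ref{sep}), and Theorem 1.1 of \cite{IPP05} is applied twice, to the amalgamated decompositions $M_1*N_2=M_1*_{N_1}N$ and $M=(M_1*N_2)*_{N_2}M_2$, giving $Q'\cap M=Q'\cap(M_1*N_2)=Q'\cap N$, which is separable; since $P'\cap pMp\subset Q'\cap M$, this contradicts the hypothesis. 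Without the deformation-based passage to a separable $Q\subset P$ and the amalgamation-over-$N_i$ trick, your outline does not reach separability of $P'\cap pMp$.
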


The proof of Theorem \ref{commutant} is based on the main technical result of \cite{IPP05}.
 By \cite[Section 5.1]{PV09}, given $\rho\in (0,1)$, we have a unital tracial completely positive map $\text{m}_\rho:M\rightarrow M$ such that  $\text{m}_\rho(x_1x_2\cdots x_n)=\rho^nx_1x_2\cdots x_n$, for every $n\in\mathbb N$ and $x_i\in M_{i_j}\ominus \mathbb C1$, where $i_j\in\{1,2\}$, for every $1\leq j\leq n$, and $i_j\not=i_{j+1}$, for every $1\leq j\leq n-1$. Note that $\lim\limits_{\rho\rightarrow 1}\|\text{m}_\rho(x)-x\|_2=0$ and the map $(0,1)\ni\rho\mapsto\|\text{m}_{\rho}(x)\|_2$ is increasing, for every $x\in M$.
 The implication (1) $\Rightarrow$ (2) follows from \cite[Theorem 4.3]{IPP05}, formulated here as in \cite[Theorem 5.4]{PV09}, see also \cite[Section 5]{Ho07}. 

\begin{thm}[\cite{IPP05}]\label{IPP}
 Let $P\subset pMp$ be a von Neumann subalgebra. 
 Then the following two conditions are equivalent:
 \begin{enumerate}
     \item There exists $\rho\in (0,1)$ such that 
     $\inf_{u\in\mathcal U(P)}\|\emph{m}_\rho(u)\|_2>0$.
     \item $P\prec_MM_1$ or $P\prec_MM_2$.
 \end{enumerate}
\end{thm}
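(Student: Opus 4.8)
My plan is to first restate condition (1) as a uniform ``bounded word length'' estimate, then dispose of (2)$\Rightarrow$(1) by the intertwining characterization, and finally treat the substantial implication (1)$\Rightarrow$(2) by Popa's deformation/rigidity, which is where \cite[Theorem 4.3]{IPP05} enters. Write $L^2(M)=\bigoplus_{n\ge 0}H_n$ for the word-length decomposition, where $H_0=\C1$ and $H_n$ is the closed span of the reduced words $x_1x_2\cdots x_n$ with $x_j\in M_{i_j}\ominus\C1$ and $i_j\ne i_{j+1}$; let $P_n$ be the orthogonal projection onto $H_n$ and $Q_N=\sum_{n\le N}P_n$. By the defining formula of $\mathrm{m}_\rho$ one has $\mathrm{m}_\rho=\sum_n\rho^nP_n$, so that $\|\mathrm{m}_\rho(u)\|_2^2=\sum_n\rho^{2n}\|P_nu\|_2^2$. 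Since $\sum_n\|P_nu\|_2^2=\tau(p)$ is fixed, an elementary splitting of this series shows that (1) is equivalent to: there exist $N\in\N$ and $\delta>0$ with $\|Q_Nu\|_2\ge\delta$ for every $u\in\mathcal U(P)$. Indeed, a uniform bound $\|\mathrm{m}_\rho(u)\|_2\ge c$ forces $\|Q_Nu\|_2^2\ge c^2/2$ once $\rho^{2N}\tau(p)<c^2/2$, while conversely $\|\mathrm{m}_\rho(u)\|_2^2\ge\rho^{2N}\|Q_Nu\|_2^2$. Thus (1) says exactly that $\mathcal U(P)$ carries uniform mass at bounded word length.

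For (2)$\Rightarrow$(1), assume $P\prec_M M_1$ (the case of $M_2$ being identical). By the quantitative content of Theorem \ref{intertwining}, the failure of the net condition provides $\delta>0$ and finitely many $x_i,y_i\in pM$ with $\sum_i\|\mathrm{E}_{M_1}(x_i^*uy_i)\|_2^2\ge\delta$ for all $u\in\mathcal U(P)$. Approximating the $x_i,y_i$ in $\|\cdot\|_2$ by vectors of word length $\le R$ and using that a product $x_i^*uy_i$ of reduced words lowers the length of $u$ by at most $2R$, we see that $\mathrm{E}_{M_1}$, which retains only words of length $\le 1$, annihilates the part of $u$ of word length $>2R+1$. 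Hence the uniform bound $\ge\delta$ forces $\|Q_{2R+1}u\|_2\ge\delta'$ uniformly on $\mathcal U(P)$, which is condition (1) by the previous paragraph. This direction is routine.

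The content of the theorem is (1)$\Rightarrow$(2). The plan is to realize $\mathrm{m}_\rho$ as the compression of the free s-malleable deformation of \cite{IPP05,PV09}: embed $M$ into $\tM=(M_1*L(\Z))*(M_2*L(\Z))$, with automorphisms $(\alpha_t)_{t\in\R}$ and the period-two flip $\beta$ (fixing $M$ pointwise, with $\beta\alpha_t\beta=\alpha_{-t}$), so that $\mathrm{m}_\rho=\mathrm{E}_M\circ\alpha_t|_M$ with $\rho=\rho(t)\to1$ as $t\to0$. In these terms (1) reads $\inf_{u\in\mathcal U(P)}\|\mathrm{E}_M\alpha_t(u)\|_2=c>0$, i.e.\ $\alpha_t$ does not push $\mathcal U(P)$ uniformly off $L^2(M)$. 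Popa's transversality inequality $\|\alpha_{2t}(x)-x\|_2\le2\|\alpha_t(x)-\mathrm{E}_M\alpha_t(x)\|_2$ gives a first uniform control of $\sup_u\|\alpha_{2t}(u)-u\|_2$; from here one runs Popa's convexity/averaging argument over $\mathcal U(P)$, combined with the s-malleability (the flip and the analytic continuation of $\alpha_t$) and the \cite{IPP05} control of relative commutants in amalgamated free products, to extract a nonzero intertwiner between $P$ and a conjugate of $M$ inside $\tM$. Finally, since the ``moved'' bimodule $L^2(\tM)\ominus L^2(M)$ decomposes as an $M$-bimodule into coarse pieces induced from $M_1$ and from $M_2$, this intertwiner must be supported over a single vertex algebra, and pulling it back to $M$ yields $P\prec_M M_1$ or $P\prec_M M_2$.

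The main obstacle is precisely this last step. The hypothesis is a uniform but \emph{non-infinitesimal}, single-scale bound (equivalently, uniform mass at bounded word length), so one cannot simply let $t\to0$, and a single use of transversality does not by itself produce a positive uniform overlap $\langle\alpha_{2t}(u),u\rangle$ unless $c$ happens to be large. Overcoming this requires the full s-malleable structure to run the spectral-gap/convexity extraction at a fixed scale, together with the tree-like combinatorics of $M=M_1*M_2$, which is what forces the resulting corner to embed coherently into a single $M_i$ rather than splitting between the two; these are exactly the contributions of \cite{IPP05}.
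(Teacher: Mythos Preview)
Your proposal is correct in outline, but it takes a longer route than the paper in both directions, and in $(1)\Rightarrow(2)$ you end up deferring to \cite{IPP05} at exactly the point the paper handles with a one-line identity.

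For $(2)\Rightarrow(1)$: your argument via the net condition of Theorem~\ref{intertwining} and word-length combinatorics is valid (the key point being that for $x,y$ of word length $\le R$ and $z\in M_i$, the element $xzy^*$ has word length $\le 2R+1$, so $\rE_{M_i}(x^*uy)$ depends only on $Q_{2R+1}u$). The paper instead uses the partial isometry form of Theorem~\ref{intertwining}: from $P\prec_M M_i$ one obtains a corner with $(p_0Pp_0)_1p'\subset v^*(M_i)_1v$, and since $\|\mathrm{m}_\rho(x)-x\|_2\le|1-\rho|$ for $x\in(M_i)_1$, one gets $\mathrm{m}_\rho\to\id$ uniformly on that corner, which propagates to give $(1)$ directly. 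This avoids the word-length bookkeeping entirely.

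For $(1)\Rightarrow(2)$: you reconstruct the s-malleable deformation $(\alpha_t,\beta)$ on $\tM$, apply transversality, and then---as you yourself note---hit the obstacle that a single-scale bound does not yield small $\|\alpha_{2t}(u)-u\|_2$, so you must invoke the full \cite{IPP05} extraction machinery. The paper sidesteps all of this with the identity
\[
\tau(u^*\mathrm{m}_{\rho^2}(u))=\|\mathrm{m}_\rho(u)\|_2^2,
\]
which is immediate from the word-length expansion $\mathrm{m}_\rho=\sum_n\rho^nP_n$ (so $\mathrm{m}_\rho$ is positive self-adjoint on $L^2(M)$ with $\mathrm{m}_\rho^2=\mathrm{m}_{\rho^2}$). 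Thus condition $(1)$ is literally the hypothesis $\inf_{u\in\mathcal U(P)}\tau(u^*\mathrm{m}_{\rho^2}(u))>0$ of \cite[Theorem~5.4]{PV09}, and one cites that result. Your detour through $\tM$, transversality, and the bimodule decomposition of $L^2(\tM)\ominus L^2(M)$ is the internal content of that theorem, but there is no need to unpack it here.

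In short: what you wrote is not wrong, but you are re-deriving pieces of \cite{IPP05,PV09} where a clean citation suffices, and you missed the identity $\tau(u^*\mathrm{m}_{\rho^2}(u))=\|\mathrm{m}_\rho(u)\|_2^2$ that makes $(1)\Rightarrow(2)$ a direct quote of the literature.
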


\begin{proof}
   Assume that (1) holds. Since $\tau(x^*\text{m}_{\rho^2}(x))=\|\text{m}_\rho(x)\|_2^2$, for every $x\in M$, we get that 
   $\inf_{u\in\mathcal U(P)}\tau(u^*\text{m}_{\rho^2}(u))>0$ and \cite[Theorem 5.4]{PV09} implies (2). 
   
   To see that (2) $\Rightarrow$ (1), assume that $P\prec_MM_i$, for some $i\in\{1,2\}$. By  Theorem \ref{intertwining}
  we find a nonzero partial isometry $v\in M$ such that $v^*v=p_0p$, for some projections $p_0\in P,p'\in P'\cap pMp$, and $(p_0Pp_0)_1p'\subset v^*(M_i)_1v$. Since $\|\text{m}_\rho(x)-x\|_2\leq |\rho-1|$, for every $x\in (M_i)_1$, we get that $\lim_{\rho\rightarrow 1}(\sup_{x\in (p_0Pp_0)_1p'}\|\text{m}_\rho(x)-x\|_2)=0$. Let $p_1$ be the central support of $p_0$ in $P$ and denote $p''=p_1p'\in P'\cap pMp$. It follows that $\lim_{\rho\rightarrow 1}(\sup_{x\in (Pp'')_1}\|\text{m}_\rho(x)-x\|_2)=0$. 
  From this it is easy to deduce that 
   $\liminf_{\rho\rightarrow 1}(\inf_{u\in\mathcal U(P)}\|\text{m}_\rho(u)\|_2)\geq \|p''\|_2>0$, which clearly implies (1).
\end{proof}

\begin{cor}\label{sepsubalg}
    Let $P\subset pMp$ be a von Neumann subalgebra such that $P\nprec_MM_1$ and $P\nprec_MM_2$. Then there exists a separable von Neumann subalgebra $Q\subset P$ such that $Q\nprec_MM_1$ and $Q\nprec_MM_2$.
\end{cor}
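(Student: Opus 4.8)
The plan is to read the two non-embedding hypotheses through Theorem \ref{IPP}. By the equivalence (1) $\Leftrightarrow$ (2) there, the assumption $P\nprec_M M_1$ and $P\nprec_M M_2$ is precisely the statement that $\inf_{u\in\mathcal U(P)}\|\text{m}_\rho(u)\|_2=0$ for every $\rho\in(0,1)$. I will produce a countably generated --- hence separable --- subalgebra $Q\subset P$ for which the same vanishing holds on $\mathcal U(Q)$; Theorem \ref{IPP} applied to $Q\subset pMp$ then immediately gives $Q\nprec_M M_1$ and $Q\nprec_M M_2$.

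First I would fix an increasing sequence $\rho_n\nearrow 1$ in $(0,1)$, for instance $\rho_n=1-\tfrac1{n+1}$. For each pair $n,m\in\mathbb N$, since $\inf_{u\in\mathcal U(P)}\|\text{m}_{\rho_n}(u)\|_2=0$ I may choose $u_{n,m}\in\mathcal U(P)$ with $\|\text{m}_{\rho_n}(u_{n,m})\|_2<\tfrac1m$. I then let $Q\subset pMp$ be the von Neumann subalgebra generated by the countable family $\{u_{n,m}:n,m\in\mathbb N\}$. It is separable; it is unital with unit $p$ (as $p=u_{1,1}^*u_{1,1}$ lies in it); and $Q\subset P$, so that every unitary of $Q$ is a unitary of $P$.

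It remains to verify $Q\nprec_M M_1$ and $Q\nprec_M M_2$, which I would do by contradiction via Theorem \ref{IPP}. If either embedding held, there would exist $\rho\in(0,1)$ with $c:=\inf_{u\in\mathcal U(Q)}\|\text{m}_\rho(u)\|_2>0$. Choosing $n$ with $\rho_n\geq\rho$ and using the monotonicity of the map $\rho\mapsto\|\text{m}_\rho(x)\|_2$ recorded in the excerpt, we get, for every $m$,
\[
\tfrac1m>\|\text{m}_{\rho_n}(u_{n,m})\|_2\geq\|\text{m}_\rho(u_{n,m})\|_2\geq c,
\]
since $u_{n,m}\in\mathcal U(Q)$; this is absurd as soon as $m>1/c$. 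Hence no such $\rho$ exists and Theorem \ref{IPP} yields the claim.

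I do not anticipate a real obstacle: the construction is a countable exhaustion and the verification only uses the monotonicity of $\rho\mapsto\|\text{m}_\rho(x)\|_2$ together with Theorem \ref{IPP}. The two points deserving a line of care are that $Q$ must be arranged to share the unit $p$ with $P$ (so that its unitaries genuinely sit in $\mathcal U(P)$ and in $pMp$), and that the sequence $\rho_n$ must approach $1$ from below, so that for large $n$ the value $\rho_n$ dominates any prescribed threshold $\rho$ --- which is exactly the direction in which the monotonicity is applied.
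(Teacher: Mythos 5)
Your proof is correct and is essentially the paper's argument: extract unitaries in $\mathcal U(P)$ with $\|\mathrm{m}_{\rho_n}(u)\|_2$ small via Theorem \ref{IPP}, let $Q$ be the separable subalgebra they generate, and use monotonicity of $\rho\mapsto\|\mathrm{m}_\rho(x)\|_2$ to show the infimum over $\mathcal U(Q)$ vanishes for every $\rho$, whence $Q\nprec_M M_1$ and $Q\nprec_M M_2$ by Theorem \ref{IPP} again. The only cosmetic difference is your double-indexed family $u_{n,m}$ where the paper uses the diagonal sequence $u_n$ with $\|\mathrm{m}_{1-1/n}(u_n)\|_2\leq 1/n$.
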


\begin{proof}
   Since $P\nprec_MM_1$ and $P\nprec_MM_2$, by Theorem \ref{IPP} we find a sequence $u_n\in\mathcal U(P)$ such that $\|m_{1-1/n}(u_n)\|_2\leq 1/n$. Let $Q\subset P$ be the separable von Neumann subalgebra generated by $\{u_n\}_{n\geq 1}$.  Let $\rho\in (0,1)$. Then for every $n\geq 1$ such that $\rho\leq 1-1/n$ we have that $\|\text{m}_{\rho}(u_n)\|_2\leq\|\text{m}_{1-1/n}(u_n)\|_2\leq 1/n$.
   This implies $\inf_{u\in\mathcal U(Q)}\|\text{m}_\rho(u)\|_2=0$.  Since this holds for every $\rho\in (0,1)$, Theorem \ref{IPP} implies that  $Q\nprec_MM_1$ and $Q\nprec_MM_2$.
\end{proof}

\begin{lem}\label{sep}
Let $Q\subset M$ be a separable von Neumann subalgebra. Then we can find separable von Neumann subalgebras $N_1\subset M_1$ and $N_2\subset M_2$ such that $Q\subset N_1*N_2$.
\end{lem}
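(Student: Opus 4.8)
The plan is to build $N_1$ and $N_2$ by a standard countable exhaustion argument, iterating the decomposition of an arbitrary element of $M = M_1 * M_2$ into reduced words. Recall that $M$ is densely spanned (in $\|\cdot\|_2$) by $1$ together with alternating products $x_1 x_2 \cdots x_n$ with $x_j \in M_{i_j} \ominus \mathbb{C}1$ and $i_j \neq i_{j+1}$. Since $Q$ is separable, fix a countable $\|\cdot\|_2$-dense subset $\{q_k\}_{k \geq 1}$ of the unit ball of $Q$. For each $k$, approximate $q_k$ in $\|\cdot\|_2$ by a finite sum of reduced words; collecting the finitely many ``letters'' $x_j \in M_1 \ominus \mathbb{C}1$ appearing across these approximations (over all $k$) gives a countable subset $S_1^{(0)} \subset M_1$, and similarly a countable $S_2^{(0)} \subset M_2$. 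The issue is only that the von Neumann algebras generated by $S_1^{(0)}$ and $S_2^{(0)}$ need not contain $Q$ on the nose, merely a $\|\cdot\|_2$-dense subset; so one iterates.

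First I would make the iteration precise. Given countable sets $S_1 \subset M_1$, $S_2 \subset M_2$, let $N_i = W^*(S_i) \subset M_i$, which is separable. The subalgebra $N_1 * N_2 \subset M$ is then separable, hence has a countable $\|\cdot\|_2$-dense subset of its unit ball; but I want more: I want a countable set $T$ whose $\|\cdot\|_2$-closed span contains $N_1 * N_2$ and such that each element of $T$, being a finite sum of reduced words in $N_1 \ominus \mathbb C 1$ and $N_2 \ominus \mathbb C 1$, is ``visibly'' built from countably many letters of $M_1$ and $M_2$. Concretely, the $\mathbb{Q}(i)$-linear span of reduced words whose letters lie in a fixed countable $\|\cdot\|_2$-dense $\mathbb{Q}(i)$-subalgebra of each $N_i$ is $\|\cdot\|_2$-dense in $N_1 * N_2$. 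Now define a sequence: start with $Q$, extract $S_1^{(0)}, S_2^{(0)}$ as above so that $Q \subset \overline{\operatorname{span}}^{\,\|\cdot\|_2}(\text{reduced words in } S_1^{(0)}, S_2^{(0)})$; then set $S_i^{(m+1)}$ to be $S_i^{(m)}$ together with the countable set of letters needed to write a $\|\cdot\|_2$-dense subset of $W^*(S_1^{(m)})*W^*(S_2^{(m)})$ as reduced words — here one uses that $W^*(S_i^{(m)})$ is separable and so a countable dense $*$-subalgebra suffices. Let $S_i = \bigcup_{m} S_i^{(m)}$ and $N_i = W^*(S_i)$, both separable.

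It remains to check $Q \subset N_1 * N_2$. Take $q \in Q$; it lies in the $\|\cdot\|_2$-closure of the span of reduced words in $S_1^{(0)}, S_2^{(0)} \subset S_1, S_2$, hence $q \in \overline{W^*(S_1^{(0)})*W^*(S_2^{(0)})}^{\,\|\cdot\|_2}$. The point of the iteration is that $\bigcup_m W^*(S_1^{(m)})*W^*(S_2^{(m)})$ is already $\|\cdot\|_2$-dense in $N_1 * N_2$ and is an increasing union of von Neumann algebras, so its $\|\cdot\|_2$-closure $\overline{N_1 * N_2}^{\,\|\cdot\|_2}$ — intersected with $M$ — equals $N_1 * N_2$ (a von Neumann algebra is $\|\cdot\|_2$-closed in the ambient tracial von Neumann algebra). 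Therefore $q \in N_1 * N_2$, as desired.

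The main obstacle is purely bookkeeping: ensuring at each stage that ``the letters needed to express a dense subset of the free product generated so far'' again form a \emph{countable} set, which is where one must pass to a countable $\|\cdot\|_2$-dense $*$-subalgebra of each separable $W^*(S_i^{(m)})$ rather than the whole algebra, and where one uses that reduced words over $M_i \ominus \mathbb C 1$ span a dense subspace of the free product (the Fock-space/moment description of the free product trace). Once the countability is managed, closure of von Neumann algebras under $\|\cdot\|_2$-limits inside $(M,\tau)$ closes the argument. (Alternatively, one can phrase the whole thing via a transfinite-free downward Löwenheim–Skolem argument for tracial von Neumann algebras, but the explicit iteration above is elementary and self-contained.)
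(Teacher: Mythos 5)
Your proof is correct and takes essentially the same approach as the paper, which also expands the countably many generators of $Q$ into reduced words (using orthonormal bases of $L^2(M_i)\ominus\mathbb{C}1$ chosen inside $M_i\ominus\mathbb{C}1$, so the countably many letters are extracted in one shot) and lets $N_i$ be the separable subalgebras these letters generate. Only note that your iteration is superfluous: the letters added at stage $m+1$ already lie in $W^*(S_i^{(m)})$, so the generated algebras never grow, and your own closing observation that a von Neumann subalgebra is $\|\cdot\|_2$-closed in $M$, applied at stage $0$, already gives $Q\subset W^*(S_1^{(0)})*W^*(S_2^{(0)})$.
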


\begin{proof}
For $i\in\{1,2\}$ let $\mathcal B_i$ be an orthonormal basis of $\text{L}^2(M_i)\ominus\mathbb C1$ such that $\mathcal B_i\subset M_i\ominus \mathbb C1$. Let $\mathcal B_0$ be the set of $\xi_1\xi_2\cdots\xi_n$, where $n\in\mathbb N$,  $\xi_i\in \mathcal B_{i_j}$, for some $i_j\in\{1,2\}$, for every $1\leq j\leq n$, and $i_j\not=i_{j+1}$, for every $1\leq j\leq n-1$. Then $\mathcal B=\mathcal B_0\cup\{1\}$ is an orthonormal basis of $\text{L}^2(M)$. 

Let $\{x_k\}_{k\geq 1}$ be a sequence which generates $Q$.
Then $\mathcal C=\cup_{k\geq 1}\{\xi\in\mathcal B\mid \langle x_k,\xi\rangle\not=0\}$ is countable. For $i\in\{1,2\}$, let $\mathcal C_i$ be the countable set of all $\xi\in\mathcal B_i$ which appear in the decomposition of some element of $\mathcal C$. The von Neumann subalgebra $N_i$ of $M_i$ generated by $\mathcal C_i$ is separable, for every $i\in\{1,2\}$. Since by construction we have that $Q\subset N_1*N_2$, this finishes the proof.
\end{proof}

\begin{proof}[Proof of Theorem \ref{commutant}]
    Assume by contradiction that $P\nprec_MM_1$ and $P\nprec_MM_2$. By applying Corollary \ref{sepsubalg}, we can find a separable von Neumann subalgebra $Q\subset P$ such that $Q\nprec_MM_1$ and $Q\nprec_MM_2$. By Lemma \ref{sep}, we can further find separable von Neumann subalgebras $N_1\subset M_1$ and $N_2\subset M_2$, such that $Q\subset N:=N_1*N_2$. Denote $R=M_1*N_2$.

    Since $Q\nprec_MM_1$, $Q\subset R\subset M$ and $N_1\subset M_1$, we get that $Q\nprec_R{N_1}$. Since $Q\subset N$ and $R=M_1*_{N_1}N$, \cite[Theorem 1.1]{IPP05} implies that $Q'\cap R=Q'\cap N$. Next, since $Q\nprec_MM_2$ and $N_2\subset M_2$, we get that $Q\nprec_MN_2$. Since $Q\subset R$ and $M=R*_{N_2}M_2$, applying \cite[Theorem 1.1]{IPP05} again gives that $Q'\cap M=Q'\cap R$. Altogether, we get that $Q'\cap M=Q'\cap N$. Since $N$ and thus $Q'\cap N$ is separable,  using that $P'\cap M\subset Q'\cap M$, we conclude that $P'\cap M$ is separable.
\end{proof}

\subsection{Non-separable MASAs in free product algebras} In this subsection, we derive some consequences of Theorem \ref{commutant} to the structure of non-separable MASAs in free product algebras.

\begin{cor}
    \label{cut}
Let $(M_1,\tau_1)$ and $(M_2,\tau_2)$ be tracial von Neumann algebras, and denote by $M=M_1*M_2$ their free product. 
       Let $A\subset pMp$ be a purely non-separable MASA. Then there exist projections $(p_k)_{k\in K}\subset A$ and unitaries $(u_k)_{k\in K}\subset M$
 such that $\sum_{k\in K}p_k=p$ and for every $k\in K$,   $u_kAp_ku_k^*\subset M_i$, for some $i\in\{1,2\}$.    
 
\end{cor}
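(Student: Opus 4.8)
The plan is to exhaust the projection $p$ by a maximal family of mutually orthogonal projections in $A$, each of which can be conjugated into one of the two free factors. First I would consider the set $\mathcal{P}$ of families $(p_k)_{k\in K}$ of pairwise orthogonal nonzero projections in $A$, together with unitaries $u_k\in M$ such that $u_k (Ap_k) u_k^* \subseteq M_{i(k)}$ for some $i(k)\in\{1,2\}$; order $\mathcal{P}$ by inclusion and extract, by Zorn's lemma, a maximal element $(p_k)_{k\in K}$. Set $q = p - \sum_{k\in K} p_k \in A$; the goal is to show $q = 0$.

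Suppose $q \neq 0$. Then $Aq \subseteq qMq$ is still a purely non-separable MASA (a corner of a MASA is a MASA in the corner, and pure non-separability passes to corners by definition). In particular, $Aq$ is abelian and non-separable, hence its relative commutant $(Aq)' \cap qMq$ contains $Aq$ itself and is therefore non-separable. Applying Theorem \ref{commutant} to $P = Aq \subseteq qMq$ — here I use that $qMq$ sits inside $M = M_1 * M_2$ and the hypothesis of Theorem \ref{commutant} is exactly that the relative commutant inside the corner is non-separable — we conclude $Aq \prec_M M_1$ or $Aq \prec_M M_2$, say $Aq \prec_M M_1$. Now invoke the intertwining theorem, Theorem \ref{intertwining}: there are nonzero projections $p_0 \in Aq$ and $q_0 \in M_1$, a $*$-homomorphism $\theta\colon p_0 (Aq) p_0 \to q_0 M_1 q_0$, and a nonzero partial isometry $v \in q_0 M p_0$ with $\theta(x) v = v x$ for all $x \in p_0 A p_0$. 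Since $A$ is a MASA, one upgrades this to a genuine unitary conjugacy on a corner: because $Aq\subset qMq$ and $M_1\subset M$ are both MASAs (in their respective compressions — note $M_1$ need not be a MASA in $M$ in general, so here I would instead argue directly).

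The cleanest route past this last point is to apply Theorem \ref{masa} style reasoning only when available; absent that, I would argue as follows. From $\theta(x)v = vx$ we get $v^*v \in (p_0 A p_0)' \cap p_0 M p_0$; since $A$ is a MASA in $pMp$ and $p_0\le p$, we have $(p_0 A p_0)' \cap p_0 M p_0 = A p_0$, so $v^*v$ is a projection in $Ap_0 \subseteq A$, call it $e \le q$. Replacing $p_0$ by $e$ (and shrinking accordingly), we may assume $v^*v = e \in A$ and $vv^* \in M_1$. Then $u := v + (1 - vv^*)^{1/2}\cdots$ — more precisely, one extends the partial isometry $v$ to a unitary $u \in M$ (possible since $\tau(v^*v) = \tau(vv^*)$ because $\tau$ is a trace and $M$ is a II$_1$ factor, so $1 - v^*v$ and $1 - vv^*$ are equivalent projections): choosing any partial isometry $w$ with $w^*w = 1 - v^*v$ and $ww^* = 1 - vv^*$ and setting $u = v + w$. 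Then $u e u^* = v v^*$ and for $x \in eAe$ we compute $u x u^* = v x v^* = \theta(x) v v^* \in M_1$, so $u(Ae)u^* = u(eAe)u^* \subseteq M_1$ with $e \in A$, $0 \neq e \le q$. But then the family $(p_k)_{k\in K} \cup \{e\}$ with the extra unitary $u$ lies in $\mathcal{P}$ and strictly refines the maximal element, a contradiction. Hence $q = 0$ and $\sum_{k\in K} p_k = p$, which is the desired conclusion.

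The main obstacle I anticipate is the passage from the abstract intertwining datum $(\theta, v)$ of Theorem \ref{intertwining} to an honest unitary $u \in M$ with $u(Ae)u^* \subseteq M_i$: one must correctly identify $v^*v$ as a projection in $A$ (using the MASA property to pin down the relative commutant), handle the possibility that $\theta$ is not injective or not surjective, and extend $v$ to a unitary using that $M$ is a finite factor so that equivalence of the complementary projections is automatic. A secondary technical point is verifying that $Aq$ remains purely non-separable and that its relative commutant in $qMq$ is non-separable — this is immediate from the definitions, but worth stating so that Theorem \ref{commutant} applies cleanly. Everything else (Zorn's lemma, the contradiction with maximality) is routine.
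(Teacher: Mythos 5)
Your overall strategy (Zorn's lemma exhaustion, applying Theorem \ref{commutant} to $Aq$, then Theorem \ref{intertwining}, identifying $v^*v$ as a projection in $A$ via the MASA property, and extending $v$ to a unitary) is exactly the paper's route, and those steps are carried out correctly. However, there is one genuine gap, and it sits precisely at the point you wave through: the assertion that (after shrinking) $vv^*\in M_1$. Theorem \ref{intertwining} only gives $v\in q_0Mp_0$ with $q_0\in M_1$, so a priori $vv^*$ is just some projection in $q_0Mq_0$, not in $M_1$. Your final computation $u(Ae)u^*=vAev^*=\theta(Ae)vv^*$ lands in $M_1$ only if $vv^*\in M_1$ (indeed $vv^*=\theta(e)vv^*$ lies in $\theta(Ae)vv^*$, so the containment $\theta(Ae)vv^*\subset M_1$ is equivalent to $vv^*\in M_1$). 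You correctly observe that $M_1$ need not be a MASA in $M$, so the Theorem \ref{masa}-type upgrade is unavailable, but the ``direct argument'' you substitute never addresses $vv^*$ at all; for a general inclusion $B\subset M$ in place of $M_1\subset M_1*M_2$, the claim is simply false, so some input from the free product structure is indispensable here.

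The paper closes this gap as follows: from $\theta(x)v=vx$ one gets $vv^*\in\theta(Ae)'\cap fMf$, and since $\theta(Ae)$ is a \emph{diffuse} subalgebra of $fM_1f$ (diffuseness coming from pure non-separability of $Ae$, after arranging $\theta$ to be injective on the relevant corner --- a point you flagged but did not resolve), the relative commutant control in free products, \cite[Theorem 1.1]{IPP05}, forces $\theta(Ae)'\cap fMf\subset fM_1f$, hence $vv^*\in M_1$. With that ingredient inserted, the rest of your argument (the choice of a unitary $u$ with $ue=v$, using finiteness to match the complementary projections, and the contradiction with maximality) goes through and coincides with the paper's proof.
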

\begin{proof}  By a maximality argument, it suffices to prove that if $q\in A$ is a nonzero projection, then there are a nonzero projection $r\in Aq$, a unitary $u\in M$ and $i\in\{1,2\}$ such that $uAru^*\subset M_i$. 

    To this end, let $q\in A$ be a nonzero projection. Since $(Aq)'\cap qMq=Aq$ is non-separable, Theorem \ref{commutant} implies that there is $i\in\{1,2\}$ such that $Aq\prec_MM_i$. By Theorem \ref{intertwining}, we can find nonzero projections $e\in Aq,f\in M_i$, a nonzero partial isometry $v\in fMe$ and a $*$-homomorphism $\theta:Ae\rightarrow fM_if$ such that $\theta(x)v=vx$, for every $x\in Ae$. Then $r:=v^*v\in (Ae)'\cap eMe=Ae$ and $vv^*\in\theta(Ae)'\cap fMf$. Since $\theta(Ae)\subset fM_if$ is diffuse, by applying \cite[Theorem 1.1]{IPP05} (see also \cite[Remarks 6.3.2)]{PoJOT}) we get that $vv^*\in fM_if$. 
    Finally, let $u\in M$ be any unitary such that $ur=v$. Then $uAru^*=vArv^*=vAev^*=\theta(Ae)vv^*\subset M_i$, which finishes the proof.
 \end{proof}

We continue by generalizing Corollary \ref{cut} to arbitrary tracial free products. 

\begin{cor}\label{arbitrary free products}
    Let $(M_i,\tau_i)$, $i\in I$, be a colection of tracial von Neumann algebras, and denote by $M=*_{i\in I}M_i$ their free product.   Let $A\subset pMp$ be a purely non-separable MASA. Then there exist projections $(p_k)_{k\in K}\subset A$ and unitaries $(u_k)_{k\in K}\subset M$
 such that $\sum_{k\in K}p_k=p$ and for every $k\in K$,   $u_kAp_ku_k^*\subset M_i$, for some $i\in I$.   
\end{cor}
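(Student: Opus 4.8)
The plan is to reduce the general case $M=*_{i\in I}M_i$ to the two-factor case already established in Corollary \ref{cut}. The key observation is that a free product over an arbitrary index set can be written as an (internal) free product of two subalgebras in many ways, and that a purely non-separable MASA, being in particular the relative commutant of itself, will intertwine into one side of such a decomposition. Concretely, by a maximality/exhaustion argument identical to the one opening the proof of Corollary \ref{cut}, it suffices to show: given a nonzero projection $q\in A$, there exist a nonzero projection $r\in Aq$, a unitary $u\in M$, and an index $i\in I$ with $uAru^*\subset M_i$.

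First I would fix such a $q$ and note that $Aq\subset qMq$ is a purely non-separable MASA, so $(Aq)'\cap qMq=Aq$ is non-separable. The idea is to localize: one wants to find a single factor $M_{i_0}$ (or a finite free product of factors) capturing $Aq$. To do this, I would first apply Corollary \ref{sepsubalg}, or rather its proof strategy, in the following form. Consider any partition $I=I_1\sqcup I_2$ into two nonempty pieces and write $M=N_1*N_2$ with $N_1=*_{i\in I_1}M_i$, $N_2=*_{i\in I_2}M_i$. Since $(Aq)'\cap qMq=Aq$ is non-separable, Theorem \ref{commutant} applies to the two-factor free product $M=N_1*N_2$ and gives $Aq\prec_M N_1$ or $Aq\prec_M N_2$; say $Aq\prec_M N_{1}$. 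By Theorem \ref{masa} (as $Aq$ is a MASA and, after the usual argument, $N_1$ can be taken with a MASA-like role — more precisely one first passes to a corner), there are nonzero projections giving a partial unitary conjugation of a corner of $Aq$ into $N_1$; replacing $q$ by a subprojection $r\in Aq$ and conjugating, we may assume $Ar\subset N_1=*_{i\in I_1}M_i$ with $|I_1|<|I|$ if $I$ is finite, or at least that we have pushed $Ar$ into a free product over a strictly smaller index set.

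The main obstacle is handling infinite $I$: one cannot simply induct on $|I|$. Here I would instead use separability. By Corollary \ref{sepsubalg} applied to $Aq\subset qMq$ inside $M=N_1*N_2$ for a suitable partition — iterating, or more cleanly, by directly invoking Lemma \ref{sep} in the appropriate generality — one finds a separable von Neumann subalgebra $Q\subset Aq$ with $Q$ "living" inside a free product $*_{i\in F}N_i'$ over a \emph{countable} subset $F\subset I$ of separable subalgebras $N_i'\subset M_i$; but to get $Aq$ itself into a countable sub-free-product I would argue as in the proof of Theorem \ref{commutant}: iterating the two-factor dichotomy together with the relative commutant control $Q'\cap M = Q'\cap(\text{sub-free-product})$ from \cite[Theorem 1.1]{IPP05} forces $Aq=(Aq)'\cap qMq$ to be contained, up to unitary conjugacy of a corner, in $*_{i\in F}M_i$ for some countable $F$. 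Once $Aq$ (after cutting by $r\in Aq$ and conjugating) sits inside a countable free product $*_{i\in F}M_i$, I would write $F=\{i_1,i_2,\dots\}$ and realize $*_{i\in F}M_i = M_{i_1}*\big(*_{i\in F\setminus\{i_1\}}M_i\big)$ as a two-factor free product, apply Corollary \ref{cut} to split $Ar$ (up to further cutting) into a part conjugate into $M_{i_1}$ and a part conjugate into $*_{i\in F\setminus\{i_1\}}M_i$, and repeat; a maximality argument over the resulting projections, using that the countable iteration exhausts $r$, yields the desired partition $(p_k)_{k\in K}$ of $p$ with each $u_kAp_ku_k^*\subset M_{i(k)}$ for some $i(k)\in I$. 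The one point requiring care is that at each cutting step the pieces remain purely non-separable MASAs (in the relevant corner), so that Corollary \ref{cut} and Theorem \ref{commutant} keep applying; this follows since a corner of a purely non-separable MASA is again a purely non-separable MASA in the corresponding corner.
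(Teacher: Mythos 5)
Your reduction to a countable index set and the maximality framework match the paper's strategy, and iterating the two-factor result (Corollary \ref{cut}) against decompositions $M_{i_1}*(*_{m>1}M_{i_m})$, $M_{i_2}*(*_{m>2}M_{i_m})$, \dots is also how the paper begins. But the final step, where you assert that ``the countable iteration exhausts $r$,'' is precisely the point that requires a new idea, and your proposal does not supply it. After taking a maximal family $\{p_k\}$ of orthogonal projections in $A$ whose corners conjugate into single factors, the remainder $r=p-\sum_k p_k$ satisfies: no corner of $Ar$ embeds into any $M_i$, while repeated use of Theorem \ref{commutant}/Corollary \ref{cut} only yields $Ar\prec^{\text{f}}_M *_{m>n}M_{i_m}$ for every $n$. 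Nothing in the two-factor dichotomy prevents a nonzero $r$ from ``escaping to infinity'' in this way: at each stage the piece assigned to the tail may be everything, each step conjugates by a new unitary into a deeper tail, and there is no limit procedure or control guaranteeing that the pieces assigned to single factors accumulate to $p$. So the maximality argument alone does not close the proof, in contrast with the two-factor case where the dichotomy immediately contradicts maximality.

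The paper rules out this scenario by a substantial separate argument following \cite[Proposition 4.2]{HU15}: full intertwining of $Ar$ into all tails $*_{m>n}M_{i_m}$ produces, via the free flip bimodule $\mathrm{L}^2(M*M)$, a sequence of almost $Ar$-central vectors living in a bimodule that is coarse over $Ar$; these vectors are then supported (up to small error) on $\mathrm{L}^2(A_0)\otimes\mathrm{L}^2(A_0)$ for a \emph{separable} $A_0\subset Ar$, and pure non-separability of $Ar$ gives, via Theorem \ref{intertwining}, a unitary $u\in Ar$ with $\|\mathrm{E}_{A_0}(u)\|_2$ small, from which one derives $\|a\cdot\zeta_n\|_2\to 0$ for $a=u-\mathrm{E}_{A_0}(u)$ and a contradiction with $\|\zeta_n\|_2\to\|r\|_2>0$. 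Note that pure non-separability must be used again at this stage, beyond its role inside Theorem \ref{commutant}: your proposal uses it only through the two-factor results, which is not enough. Unless you supply this tail argument (or an alternative reason why $Ar\prec^{\text{f}}_M *_{m>n}M_{i_m}$ for all $n$ forces $r=0$ for purely non-separable $A$), the proof is incomplete. As a minor side remark, your intermediate appeal to Theorem \ref{masa} to conjugate a corner of $Aq$ into $N_1$ is not quite right as stated, since $N_1$ is not a MASA; the correct mechanism (as in the proof of Corollary \ref{cut}) is Theorem \ref{intertwining} combined with \cite[Theorem 1.1]{IPP05} to place $vv^*$ inside the target algebra, and the countable reduction is cleaner via a separable diffuse $A_0\subset A$ with $A\subset A_0'\cap pMp\subset *_{j\in J}M_j$, with no conjugation needed.
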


\begin{proof}
Let $A_0\subset A$ be a separable diffuse von Neumann subalgebra. Reasoning similarly to the proof of Lemma \ref{sep} yields a countable set $J\subset I$ such that $A_0\subset *_{j\in J}M_j$.
    Since $A_0$ is diffuse, \cite[Theorem 1.1]{IPP05} gives that $A\subset A_0'\cap pMp\subset *_{j\in J}M_j$. Thus, in order to prove the conclusion, after replacing $I$ with $J$, we may take  $I$ countable. Enumerate $I=\{i_m\}_{m\geq 1}$.

    Let $\{p_k\}_{k\in K}\subset A$ be a maximal family, with respect to inclusion, of pairwise orthogonal projections such that for every $k\in K$, there are a unitary $u_k\in M$ and $i\in I$ such that $u_kAp_ku_k^*\subset M_i$. In order to prove the conclusion it suffices to argue that $\sum_{k\in K}p_k=p$. Put $r:=p-(\sum_{k\in K}p_k)$.
    
    Assume by contradiction that $r\not=0$. 
 We claim that 
 \begin{equation}\label{tail}\text{$Ar\nprec_M*_{m\leq n}M_{i_m}$, for every $n\geq 1$.}\end{equation} 
 Otherwise, if \eqref{tail} fails for some $n\geq 1$, then the proof of Corollary \ref{cut} gives a nonzero projection $s\in Ar$ and a unitary $u\in M$ such that $uAsu^*\subset *_{m\leq n}M_{i_m}.$ Applying Corollary \ref{cut} repeatedly gives a nonzero projection $t\in As$ and a unitary $v\in *_{m\leq n}M_{i_m}$ such that $vuAtu^*v^*\subset M_{i_m}$, for some $1\leq m\leq n$.
This contradicts the maximality of the family $\{p_k\}_{k\in K}$, and proves \eqref{tail}.
 
 If $e\in (Ar)'\cap rMr=Ar$ is a nonzero projection, then $(Ae)'\cap eMe=Ae$ is nonseparable. Since $Ae\nprec_M*_{m\leq n}M_{i_m}$ by \eqref{tail}, Theorem \ref{sep} implies that $Ae\prec_M*_{m>n}M_{i_m}$ and thus \begin{equation}\label{Ar}\text{$Ar\prec_M^{\text{f}}*_{m>n}M_{i_m}$, for every $n\geq 1$.}\end{equation}

 To get a contradiction, we follow the proof of \cite[Proposition 4.2]{HU15}. Let $\widetilde M=M*M$, identify $M$ with  $M*1\subset\widetilde M$, and denote by $\theta$ the free flip automorphism of $\widetilde M$. Endow $\mathcal H=\text{L}^2(\widetilde M)$ with the $M$-bimodule structure given by $x\cdot \xi \cdot y=\theta(x)\xi y$, for every $x,y\in M$ and $\xi\in\mathcal H$. 
Using \eqref{Ar}, 
the proof of \cite[Proposition 4.2]{HU15} yields a sequence of vectors $\eta_n\in r\cdot\mathcal H\cdot r$ such that 
$\|\eta_n\|_2\rightarrow\|r\|_2$, $\|x\cdot\eta_n\|_2\leq\|x\|_2$ and $\|a\cdot\eta_n-\eta_n\cdot a\|_2\rightarrow 0$, for every $x\in rMr$ and $a\in Ar$.

Since the $Ar$-bimodule $r\cdot H\cdot r$ is isomorphic to a multiple of the coarse $Ar$-bimodule, we obtain a sequence of vectors $\zeta_n\in\oplus_{\mathbb N}(\text{L}^2(Ar)\otimes\text{L}^2(Ar))$ such that  $\|\zeta_n\|_2\rightarrow\|r\|_2$, $\|a\cdot\zeta_n\|_2\leq\|a\|_2$ and $\|a\cdot\zeta_n-\zeta_n\cdot a\|_2\rightarrow 0$, for every $a\in Ar$. By reasoning similarly to the proof of Lemma \ref{sep}, we find a separable von Neumann subalgebra $A_0\subset Ar$ such that $\zeta_n\in\oplus_{\mathbb N}(\text{L}^2(A_0)\otimes\text{L}^2(A_0))$. 

As $A_0$ is separable and $Aq$ is non-separable, for every nonzero projection $q\in Ar$,  $Ar\nprec_{Ar}A_0$. Theorem \ref{intertwining} gives a unitary $u\in Ar$ with $\|\text{E}_{A_0}(u)\|_2\leq\|r\|_2/2$. Put $a=u-\text{E}_{A_0}(u)\in A$. 
Since $a\cdot\zeta_n\in\oplus_{\mathbb N}((\text{L}^2(Ar)\ominus \text{L}^2(A_0))\otimes\text{L}^2(A_0))$ and $\zeta_n\cdot a\in\oplus_{\mathbb N} (\text{L}^2(A_0)\otimes(\text{L}^2(Ar)\ominus\text{L}^2(A_0))$, we have that $\langle a\cdot\zeta_n,\zeta_n\cdot a\rangle=0$, for every $n$.
Using that $\|a\cdot\zeta_n-\zeta_n\cdot a\|_2\rightarrow 0$, we get that $\|a\cdot\zeta_n\|_2\rightarrow 0$. On the other hand, $\|a\cdot\zeta_n\|_2\geq \|u\cdot\zeta_n\|_2-\|\text{E}_{A_0}(u)\cdot\zeta_n\|_2\geq \|\zeta_n\|_2-\|\text{E}_{A_0}(u)\|_2\geq \|\zeta_n\|_2-\|r\|_2/2.$ Since $\|\zeta_n\|_2\rightarrow\|r\|_2>0$,  we altogether get a contradiction, which finishes the proof.
\end{proof}

We end this subsection by noticing that in the case $A\subset pMp$ is a singular MASA and $M_i$ is abelian, for every $i\in I$, the conclusion of Corollay \ref{arbitrary free products} can be strengthened as follows:

\begin{cor}\label{corner_identification}
    In the context of Corollary \ref{arbitrary free products}, assume additionally that $A\subset pMp$ is singular and $M_i$ is abelian, for every $i\in I$. Then there exist projections $(q_i)_{i\in I}\subset A$ and unitaries $(v_i)_{i\in I}\subset M$ such that $\sum_{i\in I}q_i=p$, $e_i=v_iq_iv_i^*\in M_i$ and $v_iAq_iv_i^*= M_ie_i$, for every $i\in I$.
\end{cor}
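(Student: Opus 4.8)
The plan is to refine the decomposition of Corollary~\ref{arbitrary free products} by a single Zorn-type maximality argument carried out with an extra orthogonality constraint, so that the pieces landing in a given $M_i$ glue into one projection $e_i\in M_i$ and one partial isometry $v_i$; singularity of $A$ will be used precisely to kill the only obstruction to exhaustiveness. A preliminary observation is that, under the present hypotheses, the inclusions produced by Corollary~\ref{arbitrary free products} are automatically equalities: if $s\in A$ is a nonzero projection and $u\in\mathcal U(M)$ satisfy $uAsu^*\subset M_i$, then $uAsu^*=M_if$, where $f:=usu^*$ is the unit of $uAsu^*$ (so $f\in uAsu^*\subset M_i$). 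Indeed $As$ is a MASA in $sMs$, hence $uAsu^*$ is a MASA in $fMf$, and being contained in the abelian subalgebra $M_if=fM_if$ it must coincide with it. In particular, for any nonzero projection $r\in A$, applying Corollary~\ref{arbitrary free products} to the purely non-separable MASA $Ar\subset rMr$ produces a nonzero projection $s\in Ar$, a unitary $u\in\mathcal U(M)$ and an index $i\in I$ with $uAsu^*=M_i(usu^*)$.

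Next I would consider the set of families $\{(p_k,u_k)\}_{k\in K}$, where the $p_k\in A$ are pairwise orthogonal nonzero projections, $u_k\in\mathcal U(M)$, $u_kAp_ku_k^*=M_{i(k)}f_k$ with $f_k:=u_kp_ku_k^*\in M_{i(k)}$ for some $i(k)\in I$, and such that for each $i\in I$ the projections $\{f_k:i(k)=i\}$ are pairwise orthogonal in $M_i$. By Zorn's lemma choose a maximal such family. Only countably many $p_k$ are nonzero, and for a fixed $i$ the partial isometries $w_k:=u_kp_k$ with $i(k)=i$ have pairwise orthogonal source projections $p_k$ and pairwise orthogonal range projections $f_k$; hence $v_i:=\sum_{i(k)=i}w_k$ is a partial isometry with $v_i^*v_i=q_i:=\sum_{i(k)=i}p_k\in A$, $v_iv_i^*=e_i:=\sum_{i(k)=i}f_k\in M_i$, and $v_iAq_iv_i^*=\bigoplus_{i(k)=i}M_if_k=M_ie_i$. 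The $q_i$ ($i\in I$) are pairwise orthogonal projections in $A$, and extending each $v_i$ to a unitary of $M$ (possible since $\tau(q_i)=\tau(e_i)$ and $M$ is a II$_1$ factor), it remains only to prove $\sum_{i\in I}q_i=p$.

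To this end, suppose $r:=p-\sum_iq_i\neq 0$. By the first paragraph there are a nonzero projection $s\in Ar$, a unitary $u$ and an index $i$ with $uAsu^*=M_if$, $f=usu^*\in M_i$. If $f(1_{M_i}-e_i)\neq 0$, cut $s$ down to the subprojection $s_0:=u^*\bigl(f(1_{M_i}-e_i)\bigr)u\in As$; then $uAs_0u^*=M_i\bigl(f(1_{M_i}-e_i)\bigr)$ has range projection orthogonal to $e_i$, while $s_0\leq r$ is orthogonal to every $p_k$, so $(s_0,u)$ could be adjoined to the family, contradicting maximality. Otherwise $f\leq e_i$, so $M_if\subset M_ie_i=v_iAq_iv_i^*$; pulling back through $v_i$, the projection $q_i':=v_i^*fv_i\leq q_i$ lies in $A$, and $w_0:=v_i^*us$ is a partial isometry with $w_0^*w_0=s$, $w_0w_0^*=q_i'$ and $w_0(As)w_0^*=Aq_i'$. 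Since $s\leq r$ is orthogonal to $q_i\geq q_i'$, the operator $U:=w_0+w_0^*+(p-s-q_i')$ is a unitary in $pMp$ with $UAU^*=A$, i.e.\ $U\in\mathcal N_{pMp}(A)$; but $Us=w_0\notin A$, because $w_0\in A$ would force $w_0(As)w_0^*=w_0w_0^*As=q_i'As=0$, contradicting $w_0(As)w_0^*=Aq_i'\neq 0$. Thus $U\notin\mathcal U(A)$, contradicting singularity of $A$. Hence $r=0$ and $\sum_{i\in I}q_i=p$, which completes the proof.

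The main obstacle is exactly the gluing: producing one $v_i$ per index rather than one per piece forces the maximality argument to carry the ``orthogonality inside $M_i$'' bookkeeping, and singularity is precisely what is needed to exclude the remaining case, in which a leftover corner of $A$ would otherwise have to be pushed into the already-occupied region $e_i\subset M_i$. Everything else is a routine consequence of Corollary~\ref{arbitrary free products} together with the fact that a MASA contained in an abelian subalgebra of the same corner equals it.
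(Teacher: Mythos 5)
Your proof is correct, and it reaches the conclusion by the same basic ingredients as the paper but organized differently. The paper applies Corollary \ref{arbitrary free products} once to get a full partition $\sum_k p_k=p$ with $u_kAp_ku_k^*=M_{i_k}r_k$ (the equality coming, as in your preliminary observation, from a MASA sitting inside an abelian corner), and then uses singularity of $A$ directly to conclude that $r_kr_{k'}=0$ whenever $k\neq k'$ and $i_k=i_{k'}$: otherwise two \emph{orthogonal} corners of $A$ would be unitarily conjugate, which singularity forbids; the gluing into $q_i,v_i,e_i$ is then immediate. You instead rerun a Zorn-type maximality argument with the range-orthogonality inside each $M_i$ imposed as part of the data, glue first, and invoke singularity only in the exhaustion step; your dichotomy $f(1_{M_i}-e_i)\neq 0$ versus $f\leq e_i$ is exhaustive, the first branch correctly contradicts maximality, and in the second branch your explicit unitary $U=w_0+w_0^*+(p-s-q_i')$ is in effect an inline proof of the very fact the paper cites without detail (orthogonal corners of a singular MASA are never unitarily conjugate), so the two arguments use singularity at different places but for the same purpose. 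What your route buys is that the decomposition comes out already in glued form, at the cost of repeating a maximality argument that the paper absorbs into Corollary \ref{arbitrary free products}. One small blemish: you justify extending each partial isometry $v_i$ to a unitary by saying $M$ is a II$_1$ factor, but in the stated generality $M=*_{i\in I}M_i$ with $M_i$ abelian need not be a factor; the extension nevertheless exists because in a finite von Neumann algebra $q_i\sim e_i$ (witnessed by $v_i$ itself) implies $1-q_i\sim 1-e_i$, which is also what the paper uses implicitly in the proof of Corollary \ref{cut}.
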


\begin{proof}
    By applying Corollary \ref{arbitrary free products} we find projections $(p_k)_{k\in K}\subset A$ and unitaries $(u_k)_{k\in K}\subset M$ such that $\sum_{k\in K}p_k=p$ and for every $k\in K$, $u_kAp_ku_k^*\subset M_{i_k}$, for some $i_k\in I$. Let $k\in K$ and put $r_k:=u_kp_ku_k^*\in M_{i_k}$. Since $u_kAp_ku_k^*\subset r_kMr_k$ is a MASA and $M_{i_k}$ is abelian we deduce that $u_kAp_ku_k^*=M_{i_k}r_k$, for every $k\in K$.
Let $k,k'\in K$ such that $k\not=k'$ and $i_k=i_{k'}$.
Since $A\subset pMp$ is singular and $p_kp_{k'}=0$, there are no nonzero projections $s\in Ap_k,s'\in Ap_{k'}$ such that $As$ and $As'$ are unitarily conjugated in $M$. This implies that $r_kr_{k'}=0$.
  Using this fact, it follows that if we denote $q_i=\sum_{k\in K,i_k=i}p_k$, then $v_iAq_iv_i^*\subset M_i$, for every $i\in I$. For $i\in I$, let $e_i=v_iq_iv_i^*\in M_i$. Then $v_iAq_iv_i^*\subset M_ie_i$ and since $v_iAq_iv_i^*\subset M_ie_i$ is a MASA, while $M_ie_i$ is abelian, it follows that $v_iAq_iv_i^*=M_ie_i$, as claimed.
\end{proof}

\subsection{The non-separable rank of free product von Neumann algebras}
In this section, we show that the sans core of a free product of tracial von Neumann algebras $M=*_{i\in I} M_i$ is the union  of the sans cores of $M_i, i\in I$. This allows us to deduce that the sans  rank  of $M$ is the sum of the sans ranks of $M_i,i\in I$. 

\begin{thm}
\label{fp sum}
 Let $(M_i,\tau_i)$, $i\in I$, be a colection of tracial von Neumann algebras, and denote by $M=*_{i\in I}M_i$ their free product. 
Then $\emph{r}_{\emph{ns}}(M) = \sum_{i\in I}\emph{r}_{\emph{ns}}(M_i)$.
Moreover, if $\mathcal A_i\in\mathcal A_{M_i}^{\emph{ns}}$, for every $i\in I$, then $\cup_{i\in I}\mathcal A_i\in \mathcal A_{M}^{\emph{ns}}$.
\end{thm}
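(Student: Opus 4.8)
The theorem has two parts: the formula $\nsrk(M) = \sum_i \nsrk(M_i)$ and the stronger statement that gluing sans-cores of the $M_i$ gives a sans-core of $M$. I would prove the stronger "moreover" part, and the formula will follow since $\text{d}(\cup_i \mathcal A_i) = \sum_i \text{d}(\mathcal A_i)$ and the size is an invariant of the equivalence class in $\mathcal A_M^{\text{ns}}$.

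First I would check that $\mathcal A := \cup_{i\in I}\mathcal A_i$, formed by taking representatives $\mathcal A_i = \{A_{i,k}\}_{k\in K_i} \in \mathcal S_{\text{ns}}(M_i)$ and viewing each $A_{i,k}$ inside $p_{i,k}Mp_{i,k} \subset M$, actually lies in $\mathcal S_{\text{ns}}(M)$. Each $A_{i,k}$ is abelian and purely non-separable; the two nontrivial points are (a) that $A_{i,k}$ remains a \emph{singular} MASA in $M$, not merely in $M_i$, and (b) that the members of $\mathcal A$ are pairwise disjoint in $M$. For (a): since $A_{i,k}$ is a sans-subalgebra of $M_i$, its relative commutant in $M$ equals its relative commutant in $M_i$ — this is exactly \cite[Theorem 1.1]{IPP05} applied as in the proof of Theorem \ref{commutant} (a diffuse subalgebra of a free factor has relative commutant contained in that factor), so $A_{i,k}$ is a MASA in $M$; singularity should follow from Corollary \ref{corner_identification}-type reasoning or directly: a normalizing unitary $u \in M$ of $A_{i,k}$ would give $A_{i,k} \prec_M A_{i,k}$ with a conjugating unitary, and combined with control of quasi-normalizers in AFP algebras (again \cite{IPP05}) one forces $u \in M_i$, hence $u \in \mathcal U(A_{i,k})$ by singularity in $M_i$. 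For (b): disjointness of $A_{i,k}, A_{i,k'}$ with $k \neq k'$ in the \emph{same} $M_i$ is inherited from $\mathcal S_{\text{ns}}(M_i)$ (if $A_{i,k}s \prec_M A_{i,k'}$ then by \cite{IPP05} the intertwiner lives in $M_i$, giving $A_{i,k}s \prec_{M_i} A_{i,k'}$, a contradiction); disjointness across distinct indices $i \neq i'$ is a standard free-product fact — a diffuse subalgebra of $M_i$ cannot have a corner embedding into $M_{i'}$ inside $M$ (once more from \cite{IPP05}, or from the malnormality of $M_i$ in $M$).

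Next, the crux: I must show $\mathcal A$ is \emph{maximal} in $\mathcal S_{\text{ns}}(M)$ with respect to $\leq$. Suppose $\mathcal B = \{B_j\}_{j\in J} \in \mathcal S_{\text{ns}}(M)$; I need each $B_j s$ (for every nonzero projection $s \in B_j$) to satisfy $B_j s \prec_M A_{i,k}$ for some $i,k$. Fix such a $B_j \subset qMq$; it is purely non-separable, so $(B_j)'\cap qMq = B_j$ is non-separable, and Theorem \ref{commutant} (in its general-index-set form, i.e. the inductive argument in Corollary \ref{arbitrary free products}) applies: after cutting $B_j$ by projections we may assume $B_j \prec_M M_i$ for some fixed $i$. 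Then, as in the proof of Corollary \ref{cut}, intertwining plus \cite{IPP05} gives nonzero projections $e \in B_j$, $f \in M_i$, a partial isometry $v \in fMe$, and a $*$-homomorphism $\theta : B_j e \to fM_if$ with $\theta(x)v = vx$ and $vv^* \in fM_if$; replacing $B_j e$ by the unitarily conjugate $\theta(B_j e) = M_i e'$ (using that $M_i$-corners of MASAs are full corners of $M_i$ when $M_i$ is — wait, $M_i$ need not be abelian here) — more carefully, $\theta(B_j e)$ is a purely non-separable abelian subalgebra of $M_i$, and I then need it to be \emph{singular} in $M_i$. This is the step I expect to be the main obstacle: singularity of $B_j$ in $M$ must be transferred down to singularity of its image in $M_i$. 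The argument should be: any normalizer of $\theta(B_j e)$ inside $M_i$ is a fortiori a normalizer inside $M$ (after conjugating back by $v$, using $vv^* \in M_i$), so it normalizes a corner of $B_j$; singularity of $B_j$ in $M$ then forces it into $\mathcal U(B_j e)$. Hence $\theta(B_j e)$ is a sans-subalgebra of $M_i$, so by maximality of $\mathcal A_i$ in $\mathcal S_{\text{ns}}(M_i)$ we get $\theta(B_j e) p \prec_{M_i} A_{i,k}$ for suitable $k$ and every nonzero $p$, hence $B_j e \prec_M A_{i,k}$; a maximality/exhaustion argument over all such $e$ (standard, as in the first line of the proof of Corollary \ref{cut}) upgrades this to $B_j s \prec_M A_{i,k}$-type statements covering all of $q$. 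This shows $\mathcal B \leq \mathcal A$, hence $\mathcal A$ is maximal, hence $\mathcal A \in \mathcal A_M^{\text{ns}}$, completing the proof.

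I should also double-check the reverse-type bookkeeping: the size $\text{d}(\mathcal A) = \sum_{i,k}\tau(p_{i,k}) = \sum_i \text{d}(\mathcal A_i) = \sum_i \nsrk(M_i)$ directly from the definition, and since $\nsrk$ is well-defined on the equivalence class this gives the displayed formula. The only genuinely delicate points, as flagged, are the singularity transfers (up from $M_i$ to $M$ for the members of $\mathcal A$, and down from $M$ to $M_i$ for the test family $\mathcal B$), both of which rest on the control of (quasi-)normalizers in amalgamated free products from \cite{IPP05}, exactly the toolkit the paper has already invoked.
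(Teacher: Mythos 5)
Your proposal is correct and follows essentially the same route as the paper: one direction rests on the \cite{IPP05}-based facts that MASAs, singularity and (non-)intertwining pass between the $M_i$ and $M$, and the other on Corollary \ref{arbitrary free products} to cut an arbitrary sans family of $M$ into pieces unitarily conjugated into the $M_i$, where maximality of the $\mathcal A_i$ applies. The only difference is organizational: you establish the ``moreover'' statement first and deduce the rank formula, whereas the paper proves the two size inequalities and notes that the ``moreover'' part follows by combining them.
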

The moreover assertion uses implicitly the fact, explained in the proof, that every sans family in $M_i$ is naturally a sans family in $M$, for every $i\in I$.
\begin{proof}
    We have two inequalities to prove.
    
    {\bf Inequality 1.} $\text{r}_{\text{ns}}(M) \geq \sum_{i\in I}\text{r}_{\text{ns}}(M_i)$.

    This inequality relies on several facts on free products, all of which follow from \cite[Theorem 1.1]{IPP05}. Let $i,j\in I$ with $i\not=j$.
    
    \begin{enumerate}
    \item If $A\subset pM_ip$ is a MASA, then  $A\subset pMp$ is a MASA. 
    \item If $A \subset pM_ip$ is a singular diffuse von Neumann subalgebra, then $A\subset pMp$ is singular.
    \item If $A \subset pM_ip$, $B \subset q M_iq$ are von Neumann subalgebras with $A \prec_M B$, then $A \prec_{M_i} B$.
    \item If $A \subset pM_ip$ and $B \subset qM_jq$ are diffuse von Neumann subalgebras, then $A \nprec_M B$.
    \end{enumerate}

    For $i\in I$, let $\mathcal A_i\in\mathcal A_{M_i}^{\text{ns}}$ be a maximal sans family in $M_i$. We view every (not necessarily unital) subalgebra of $M_i$ as a subalgebra of $M$. Then facts (1)-(3) imply that  $\mathcal A_i$ is a sans family in $M$. Moreover, fact (4) implies that $\mathcal A:=\cup_{i\in I}\mathcal A_i$ is a sans family in $M$.  Thus,  $$\text{r}_{\text{ns}}(M)\geq \text{d}(\mathcal A)=\sum_{i\in I}\text{d}(\mathcal A_i)=\sum_{i\in I}\text{r}_{\text{ns}}(M_i).$$

    {\bf Inequality 2.} $\nsrk(M) \leq \sum_{i\in I}\nsrk(M_i)$.
    
    Let $\cA=\{A_l\}_{l\in L} \in \mathcal A^{\text{ns}}_M$ be a maximal sans family in $M$. Let $l\in L$.
   Applying Corollary \ref{arbitrary free products} to $A_l$ gives projections $(p_{k,l})_{k\in K_l}$ and unitaries $(u_{k,l})_{k\in K_l}$ such that for every $k\in K_l$ we have $u_{k,l}A_lp_{k,l}u_{k,l}^*\subset M_i$, for some $i\in I$. For $i\in I$, let $\mathcal A_i\in\mathcal S_{\text{ns}}(M_i)$ be the collection of sans-subalgebras of $M_i$ of the form $u_{k,l}A_lp_{k,l}u_{k,l}^*$, for all $l\in L,k\in K_l$ such that $u_{k,l}A_lp_{k,l}u_{k,l}^*\subset M_i$. Then $\mathcal A$ is equivalent to $\cup_{i\in I}\mathcal A_i$, which allows us to conclude that $$\text{r}_{\text{ns}}(M)=\text{d}(\mathcal A)=\sum_{i\in I}\text{d}(\mathcal A_i)\leq\sum_{i\in I} \text{r}_{\text{ns}}(M_i).$$ This finishes the proof of the main assertion. 
    The moreover assertion now follows by combining the proofs of inequalities 1 and 2.
\end{proof}

\subsection{Proof of Theorem \ref{main}}
In preparation for the proof of Theorem \ref{main}, we first record the following direct consequence of Theorem \ref{fp sum}:

\begin{cor}\label{ab fp}
    Let $(A_i,\tau_i)$, $i\in I$, be a collection of diffuse tracial abelian von Neumann algebras, and denote by $M=*_{i\in I}A_i$ their free product. For $i\in I$, let $p_i\in A_i$ be the maximal (possibly zero) projection such that $A_ip_i$ is purely non-separable. Then $\emph{r}_{\emph{ns}}(M)=\sum_{i\in I}\tau_i(p_i)$. Moreover, if $|I|\geq 2$ and $\sum_{i\in I}\tau_i(p_i)\in (0,+\infty)$, then $M$ is a II$_1$ factor with $\mathcal F(M)=\{1\}$. 
    Also, the sans-core of $M$ is given by $\mathcal A_M^{ns}=\{A_ip_i\}_{i\in I}.$
\end{cor}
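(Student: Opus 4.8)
The plan is to derive everything in Corollary \ref{ab fp} directly from Theorem \ref{fp sum} together with the structural input already assembled in this section. First I would compute $\text{r}_{\text{ns}}(A_i)$ for a single diffuse abelian tracial von Neumann algebra $A_i$: writing $A_i = A_ip_i \oplus A_i(1-p_i)$ with $A_ip_i$ purely non-separable and $A_i(1-p_i)$ separable (this is exactly the defining property of the maximal such $p_i$), the separable summand contributes nothing, since a separable abelian algebra contains no purely non-separable subalgebra at all. On the other hand, $A_ip_i$ is itself a singular MASA in itself (any abelian algebra is maximal abelian in itself, and its normalizer equals its unitary group because it is abelian), so $\{A_ip_i\}$ is a maximal sans family in $A_i$ — maximal because any sans-subalgebra $B \subset qA_iq$ satisfies $B \subset qA_ip_iq \subset A_ip_i$, hence trivially $B \prec_{A_i} A_ip_i$. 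Therefore $\mathcal A_{A_i}^{ns}$ is represented by $\{A_ip_i\}$ and $\text{r}_{\text{ns}}(A_i) = \tau_i(p_i)$. Plugging this into Theorem \ref{fp sum} gives $\text{r}_{\text{ns}}(M) = \sum_{i\in I}\tau_i(p_i)$, and the moreover part of Theorem \ref{fp sum} shows $\mathcal A_M^{ns}$ is represented by $\cup_{i\in I}\{A_ip_i\} = \{A_ip_i\}_{i\in I}$.

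Next I would address the factoriality and fundamental group claims under the hypotheses $|I|\geq 2$ and $0 < \sum_i \tau_i(p_i) < \infty$. Factoriality of $M = *_{i\in I} A_i$ for $|I| \geq 2$ with all $A_i$ diffuse is a standard fact about free products of tracial von Neumann algebras (the center of a free product of diffuse algebras is trivial — this follows, e.g., from the computation of relative commutants via \cite[Theorem 1.1]{IPP05}, applied with $P = Z(M)$, or from classical free probability); I would cite it rather than reprove it. Once $M$ is known to be a II$_1$ factor with $0 < \text{r}_{\text{ns}}(M) < \infty$, the triviality of the fundamental group is immediate from Proposition \ref{fundamental group}: if $t \in \mathcal F(M)$ then $\text{r}_{\text{ns}}(M) = \text{r}_{\text{ns}}(M^t) = \text{r}_{\text{ns}}(M)/t$, forcing $t = 1$ since the common value lies in $(0,\infty)$.

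The only genuinely delicate point — and the one I would flag as the main obstacle — is the passage from the abstract equivalence class $\mathcal A_M^{ns}$ to the concrete assertion that the \emph{folded} family $\{A_ip_i\}_{i\in I}$ (with each $A_ip_i$ sitting in $p_iMp_i$ as a non-unital subalgebra of $M$ in the canonical way) is itself a \emph{maximal} element of $\mathcal S_{\text{ns}}(M)$, not merely equivalent to one. This requires checking that the family $\{A_ip_i\}_{i\in I}$ genuinely belongs to $\mathcal S_{\text{ns}}(M)$ — i.e. that each $A_ip_i$ is a singular MASA in $p_iMp_i$ (facts (1) and (2) in the proof of Theorem \ref{fp sum}) and that $A_ip_i$ and $A_jp_j$ are disjoint for $i \neq j$ (fact (4), since both are diffuse) — and this is precisely the content of the "moreover" assertion of Theorem \ref{fp sum}, which was proved by combining inequalities 1 and 2. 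So I would simply invoke that assertion: taking $\mathcal A_i = \{A_ip_i\} \in \mathcal A_{M_i}^{ns}$ in Theorem \ref{fp sum} yields $\cup_{i\in I}\{A_ip_i\} = \{A_ip_i\}_{i\in I} \in \mathcal A_M^{ns}$ directly. The remaining bookkeeping — that removing the separable summands $A_i(1-p_i)$ does not disturb anything, and that the free product $*_i A_i$ with the $A_ip_i$ allowed to be zero still makes sense — is routine.

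In summary, the proof is essentially a two-line deduction: (a) for diffuse abelian $A$, $\text{r}_{\text{ns}}(A) = \tau(p)$ with $\mathcal A_A^{ns} = \{Ap\}$, by inspection; (b) apply Theorem \ref{fp sum} and its moreover clause; (c) invoke standard factoriality of free products and Proposition \ref{fundamental group} for the last sentence. I do not anticipate needing any new technical machinery beyond what Theorem \ref{fp sum} already provides.
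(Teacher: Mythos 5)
Your proposal is correct and follows essentially the same route as the paper's proof: one observes that $\{A_ip_i\}$ is a maximal sans family in $A_i$, so $\text{r}_{\text{ns}}(A_i)=\tau_i(p_i)$ with $\mathcal A_{A_i}^{\text{ns}}=\{A_ip_i\}$, and then everything follows from Theorem \ref{fp sum} (including its moreover clause), Proposition \ref{fundamental group}, and the standard factoriality of a free product of at least two diffuse tracial von Neumann algebras. The only slip is the parenthetical claim that $A_i(1-p_i)$ is separable---maximality of $p_i$ only guarantees that $A_i(1-p_i)$ has no purely non-separable direct summand---but this is harmless, since what your argument actually uses is that any purely non-separable singular MASA in a corner of $A_i$ is of the form $A_iq$ with $q\le p_i$, which follows directly from the maximality of $p_i$ (if $A_iq$ is purely non-separable, so is $A_i(p_i\vee q)$, forcing $q\le p_i$).
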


\begin{proof}
    Let $i\in I$. Since $\{A_ip_i\}\in\mathcal S_{\text{ns}}(A_i)$ is a maximal element, we get that $\text{r}_{\text{ns}}(A_i)=\tau_i(p_i)$. The assertions now follow by using Theorem \ref{fp sum}, Proposition \ref{fundamental group}, and the fact that any free product of diffuse tracial von Neumann algebras is a II$_1$ factor.
\end{proof}

\begin{proof}[Proof of Theorem \ref{main}]
    Let $(A,\tau)$ be a diffuse non-separable tracial abelian von Neumann algebra. Let $p\in A$ be the maximal, necessarily non-zero, projection such that $Ap$ is purely non-separable. 
    By Corollary \ref{ab fp}, $\text{r}_{\text{ns}}(A^{*n})=n\tau(p)$, for every $2\leq n\leq\infty$. Since $p\not=0$, we get that $A^{*n}$, $2\leq n\leq \infty$, are mutually non-isomorphic, and  $\mathcal F(A^{*n})=\{1\}$, for $2\leq n<\infty$.
\end{proof}

\section{Further remarks and open problems}\label{Remarks and open problems}

\subsection{Freely complemented maximal amenable MASAs in $A^{*n}$} 
The question of whether the II$_1$ factors $A^{*n}$, $2\leq n \leq \infty$, are non-isomorphic for a non-separable diffuse tracial abelian von Neumann algebra $A$ was asked in \cite{BP23}. This was motivated by the consideration of certain ``radial-like" von Neumann subalgebras of $M=A^{*n}$, for $2\leq n\leq\infty$. Specifically, for every $1\leq k\leq n$, let $s_k$ be a semicircular self-adjoint element belonging to $A_k$, the $k^{\text{th}}$ copy of $A$ in $M$. For an $\ell^2$-summable family of real numbers $t=(t_k)$ with at least two non-zero entries, denote by $A(t)$ the abelian von Neumann subalgebra of $M$ generated by $\sum_kt_ks_k$.
 It was shown in \cite{BP23} that $A(t)\subset M$ is maximal amenable and $A(t), A(t')$ are disjoint if $t$ and $t'$ are not proportional. A key point in proving this result was to show that $A(t)\not\prec_{M} A_k$, for every $k$. Since the MASAs $A(t)$ are separable, 
despite $A$ being non-separable, this suggested that the only way to obtain a purely non-separable MASA in $M$ is to ``re-pack'' pieces of $A_k$, $1\leq k\leq n$. This further suggested the possibility of recovering $n$ from the isomorphism class of $M$. 

The construction of the family of radial-like maximal amenable MASAs $A(t)\subset M$ in \cite{BP23} was triggered by an effort to obtain examples of non freely complemented maximal amenable MASAs in the free group factors $L\mathbb F_n$. However, this remained open (see though \cite[Remark 1.4]{BP23}  for further comments concerning the inclusions $A(t)\subset A^{*n}$). Thus, there are no known examples of non freely complemented maximal amenable von Neumann subalgebras of $L\mathbb F_n$. It may 
be that in fact any maximal amenable $B\subset L\mathbb F_n$ is freely complemented (a property/question which we abbreviate as {\it FC}), see \cite[Question 5.5]{Sorin_Duke} and the  introduction of \cite{BP23}.

A test case for the FC question is proposed 
in the last paragraph of \cite{Sorin_Duke}. There it is pointed out that 
if $\{B_i\}_i$ are diffuse amenable von Neumann subalgebras of $L\mathbb F_n$ with $B_i$ freely complemented 
and $B_i\nprec_{L\mathbb F_n}B_j$, for every $i\neq j$,    
then $B=\oplus_i u_ip_iB_ip_iu_i^*$ is maximal amenable in $M$ by \cite{Sorin_generatorMASA}, 
for any projections $p_i\in B_i$ and unitaries 
$u_i\in M$ satisfying $\sum_i u_ip_iu_i^*=1$.
Thus, if FC is to hold then $B$ should be freely complemented as well. 

The FC question is equally interesting for the factors $M=A^{*n}$ with $A$ purely non-separable abelian. If  $A_k$ denotes the $k^{\text{th}}$ copy of $A$ in $M$, for every $1\leq k \leq n$, then by 
Theorem \ref{fp sum}, 
any purely non-separable singular abelian $B\subset M$ is of the form 
$B=\sum_k u_k A_kp_k u_k^*$ for some projections 
$p_k\in A_k$ and unitaries $u_k\in M$ 
with $\sum_k u_kp_ku_k^*=1$. Thus, $B$ is maximal amenable by \cite{Sorin_generatorMASA}. Hence, if  FC is to hold, then Theorem \ref{fp sum} suggests that the free complement of $B$ could be obtained by  a  ``free reassembling'' of unitary conjugates of pieces of $\{A_k(1-p_k)\}_{k=1}^n$.

\subsection{On the calculation of symmetries of $A^{*n}$} Let $M=A^{*n}$ with $A$ purely non-separable abelian. Theorem \ref{fp sum} shows that if  $\theta\in \text{\rm Aut}(M)$ then $\theta(\mathcal A^{\text{\rm ns}}_M)=\mathcal A^{\text{\rm ns}}_M$, modulo the equivalence in $\mathcal S_{\text{\rm ns}}(M)$ defined in Subsection 2.4. This suggests that one could perhaps explicitly calculate $\text{\rm Out}(M)$, for instance by identifying it with the Tr-preserving automorphisms $\alpha$ of the sans-core $\mathcal A^{\text{\rm ns}}_M$, viewed  in its unfolded form. In order to obtain from  an arbitrary such $\alpha$ an automorphism $\theta_\alpha$ of $M$ it would be sufficient to solve the FC question in its ``free repacking'' form explained in Remark 4.1 above. To prove that such a map $\alpha \mapsto \theta_\alpha$ is surjective one would need to show that if 
$\theta\in \text{\rm Aut}(M)$ implements the identity on the sans-core 
$\mathcal A^{\text{\rm ns}}_M$, then $\theta$ is inner on $M$. 

This heuristic is supported by the case of automorphisms $\theta$ of the free group $\mathbb F_2$: if $\theta(a)=a$
and $\theta(b)=gbg^{-1}$, for some $g\in \mathbb F_2$, where $a,b$ denote 
the free generators of $\mathbb F_2$, then $g$ must be of the form $g=a^k$, and so $\theta=\text{\rm Ad}(g)$ is inner. 

However, this phenomenon fails for the free groups $\mathbb F_n$ on $n\geq 3$ generators. Specifically, any $e\neq g\in \mathbb F_{n-1}=\langle a_1,\cdots,a_{n-1}\rangle$ gives rise to an outer automorphism $\theta_g$ on $\mathbb F_n$ defined by $\theta_g(a_i)=a_i$, if $1\leq i \leq n-1$, and $\theta_g(a_n)=ga_ng^{-1}$, where $a_1,\cdots, a_n$
are the free generators of $\mathbb F_n$. Similarly, if $M=A_1* ... *A_n$, 
with $A_i$ abelian diffuse, and $n\geq 3$, then any non-scalar unitary $u\in A_1 * ... *A_{n-1} * 1$ gives rise to an outer automorphism $\theta_u$ of $M$ 
defined by $\theta_u(x)=x$, if $x\in A_1 *\cdots *A_{n-1} * 1$, and $\theta_u(x)=uxu^*$, if $x\in 1*A_n$. 

A related problem is to investigate the structure of irreducible 
subfactors of finite Jones index $N\subset M=A^{*n}$, for $A$ purely non-separable abelian, with an identification of the sans-core, the sans-rank of $N$ and of the set of possible indices $[M:N]$, in the spirit of \cite[Section 7]{Po01}.

\subsection{Amplifications of $A^{*n}$} 
While Theorem \ref{main} shows that $\mathcal F(A^{*n})=1$ if $A$ is non-separable abelian and $n\geq 2$ is finite, it is still of interest to identify the amplifications $(A^{*n})^t$, for $t>0$. For arbitrary $t$ this remains open, 
but for $t=1/k$, $k\in\mathbb N$, we have the following result. We are very grateful to Dima Shlyakhtenko for pointing out to us that the $1/2$-amplification of $A^{*n}$ can be explicitly calculated for arbitrary diffuse $A$ by using existing models in free probability, a fact that  stimulated us to investigate the general $1/k$ case.

\begin{prop}\label{free_product_abelian}
 Let $(A_i,\tau_i)$, $i\in I$, be a countable collection of diffuse tracial abelian von Neumann algebras. Put $M=*_{i\in I}A_i$ and assume that $|I|\geq 2$. Let $k\geq 2$ and for every $i\in I$, let $p_{i,1},\cdots,p_{i,k}\in A_i$ be projections such that $\tau(p_{i,j})=1/k$, for every $1\leq j\leq k$, and $\sum_{j=1}^kp_{i,j}=1$.  
 
 Then $M$ is a II$_1$ factor and $M^{1/k}\cong (*_{i\in I,1\leq j\leq k}A_ip_{i,j})*D$, where
 \begin{enumerate}
\item $D=L\mathbb F_{1+|I|k(k-1)-k^2}$, if $I$ is finite, and
 \item $D=\mathbb C1$, if $I$ is infinite.
 \end{enumerate}
 \end{prop}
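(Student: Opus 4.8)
The statement is a free-probability computation, and my plan is to carry it out with the R\u{a}dulescu--Dykema machinery of interpolated free group factors $L\F_t$, $t\in(1,\infty]$: the identities $L\F_s*L\F_t\cong L\F_{s+t}$, the compression formula $(L\F_t)^{1/s}\cong L\F_{1+s^2(t-1)}$, and, crucially, Dykema's computations of free products that involve a matrix algebra or a finite-dimensional ``diagonal'' together with diffuse algebras. A feature I will need is that these computations hold with an \emph{arbitrary} tracial diffuse algebra sitting in a free factor, so that the blocks $A_ip_{i,j}$ can be carried through the argument as opaque (possibly non-separable) parameters rather than being replaced by $L\Z$.

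First, since each $A_i$ is diffuse, $M=*_{i\in I}A_i$ is a II$_1$ factor, so $M^{1/k}$ makes sense as a corner $pMp$ with $\tau(p)=1/k$; moreover every II$_1$ factor splits as $M\cong M_k(\C)\otimes M^{1/k}$ (write $1$ as a sum of $k$ equivalent orthogonal projections and pick matrix units). Since $X\mapsto M_k(\C)\otimes X$ is injective on isomorphism classes of II$_1$ factors (recover $X$ by compressing by a diagonal matrix unit, using factoriality to move the projection), the proposition reduces to the algebra identity
\[
*_{i\in I}\Bigl(\bigoplus_{j=1}^k A_ip_{i,j}\Bigr)\;\cong\;M_k(\C)\otimes\Bigl[\bigl(\,*_{i\in I,\,1\le j\le k}\,A_ip_{i,j}\bigr)*D\Bigr],
\]
where on the left I have used $A_i\cong\bigoplus_{j=1}^k A_ip_{i,j}$. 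To prove this, the plan is to produce, inside a free product extending the right-hand side, a free copy of $M_k(\C)$ together with, for each $i$, partial isometries in free position conjugating $A_ip_{i,1}$ onto $A_ip_{i,j}$ for $2\le j\le k$, and then to use Dykema's absorption lemmas for (matrix algebra)$*$(diffuse abelian) and (matrix algebra)$*$($L\F$-factor) to recognize the result as $M_k(\C)$ tensored with a free product whose abelian free factors reassemble, $k$ at a time, into the $A_i$, with a free group factor $D$ left over; equivalently one computes the $e_{11}$-corner directly. Dually, tensoring by $M_k(\C)$ turns these free products into free products amalgamated over $M_k(\C)$, which is another way to organize the same bookkeeping.

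The surviving number of free generators is pinned down, when $I$ is finite, by free-dimension accounting: $\mathrm{fdim}(M)=\sum_{i\in I}\mathrm{fdim}(A_i)=|I|$, the compression formula gives $\mathrm{fdim}(M^{1/k})=1+k^2(|I|-1)$, and the candidate right-hand side has free dimension $|I|k+\mathrm{fdim}(D)$; equating the two forces $\mathrm{fdim}(D)=1+|I|k(k-1)-k^2$, which is $\ge1$ because $|I|\ge2$ and $k\ge2$, so $D\cong L\F_{1+|I|k(k-1)-k^2}$ is a bona fide interpolated free group factor. When $I$ is infinite this count is vacuous; there $D$ can be taken trivial, because freely adjoining a free group factor to a free product of infinitely many diffuse algebras is absorbed by redistributing free generators into the infinite family, so $\bigl(*_{i\in I,\,j}A_ip_{i,j}\bigr)*L\F_s\cong *_{i\in I,\,j}A_ip_{i,j}$.

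The hard part will be the matrix-unit step: the Dykema--R\u{a}dulescu free-product and compression computations have to be run with the diffuse abelian algebras $A_ip_{i,j}$ as parameters, so one cannot simply collapse the problem to arithmetic of free group factor indices and must use (and, where the literature does not state it in the needed generality, reprove) the versions of these identities that keep track of an arbitrary tracial diffuse free factor, carefully separating the genuinely new $L\F$-generators from those that recombine into abelian blocks. Once the displayed isomorphism is established, the two assertions of the proposition --- that $M$ is a II$_1$ factor and the stated decomposition of $M^{1/k}$ --- follow at once by compressing by $e_{11}$.
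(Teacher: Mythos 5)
Your outline names the right toolkit (Dykema's free product computations run with arbitrary diffuse tracial algebras as parameters, the interpolated free group factor identities \eqref{interpolated}, absorption of $L\F_\infty$ for infinite free products), but as written it has a genuine gap: the step you yourself label ``the hard part'' is exactly the mathematical content of the proposition, and you do not carry it out. The claimed decomposition of $M^{1/k}$ requires an actual corner computation that keeps the blocks $A_ip_{i,j}$ intact; the paper does this via a compression lemma deduced from \cite[Theorem 1.2]{Dy93} --- for central projections $e_1,\dots,e_k\in P$ of trace $1/k$ and a matching partition in $Q$, one has $e_1(P*Q)e_1\cong Pe_1*\cdots*Pe_k*e_1\bigl((\C e_1\oplus\cdots\oplus\C e_k)*Q\bigr)e_1$ --- followed by an induction over $i$ in which each $A_i$ is replaced by its diagonal $\C p_{i,1}\oplus\cdots\oplus\C p_{i,k}$ one at a time, using equivalence of the $p_{i,j}$ inside $L(\Z/k\Z*\Z/k\Z)$ (with a separate argument when $k=2$) to move between corners. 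Announcing that such identities must be ``reproved in the needed generality'' is a plan, not a proof.

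Moreover, the free-dimension accounting cannot substitute for identifying $D$. Free dimension is a label attached to a construction, not a proven isomorphism invariant (its well-definedness as an invariant would entail non-isomorphism of the free group factors), so from an isomorphism of two algebras you may not equate their fdims and ``solve'' for $\mathrm{fdim}(D)$; you would also need to know beforehand that the leftover piece is an interpolated free group factor at all, and fdim is not even defined for your non-separable blocks. The paper instead identifies the leftover corner explicitly: after the induction it is $p_{n,1}C_np_{n,1}$ with $C_n\cong L(*_{i=1}^n\Z/k\Z)$, computed to be $L\F_{1+nk(k-1)-k^2}$ via $L(\Z/k\Z*\Z/k\Z)\cong L\F_{2(1-1/k)}$, $L\F_r*L(\Z/k\Z)\cong L\F_{r+1-1/k}$ and \eqref{interpolated}, with the degenerate case $n=k=2$ (where $C_2\cong L\Z\otimes\mathbb M_2(\C)$ is not a factor) treated separately --- a case your uniform bookkeeping would not flag. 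Finally, for infinite $I$ you assert absorption of $L\F_s$, which is fine, but you omit the step that produces the decomposition of $M^{1/k}$ in the first place, namely that $1/k$-amplification commutes with infinite free products (\cite[Theorem 1.5]{DyRa}), applied after grouping the $A_i$ into pairs so that each free factor is a II$_1$ factor.
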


Recall that the {\it interpolated free group factors},  $L\mathbb F_r$, $1<r\leq\infty$,   introduced in \cite{Ra94,Dy94}, satisfy the formulas  \begin{equation}\label{interpolated}\text{$L\mathbb F_r*L\mathbb F_{r'}\cong L\mathbb F_{r+r'}$\; and \; $(L\mathbb F_r)^t\cong L\mathbb F_{1+\frac{(r-1)}{t^2}}$,\; for every $1\leq r,r'\leq\infty$ and $t>0$.}\end{equation}

\begin{proof}
We will use  the following consequence of \cite[Theorem 1.2]{Dy93}:

\begin{fact}[\cite{Dy93}]\label{dykema}
Let $P,Q$ be two tracial von Neumann algebras, and $e\in P$ be a central projection (hence, $P=Pe\oplus P(1-e))$. Denote $R=P*Q$ and $S=(\mathbb Ce\oplus P(1-e))*Q\subset R$.
Then $Pe$ and $eSe$ are free and together generate $eRe$, hence $eRe\cong Pe*eSe$.
\end{fact}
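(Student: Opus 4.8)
The plan is to prove the two defining properties of a free product directly---that $Pe$ and $eSe$ are free in $(eRe,\tau_e)$, where $\tau_e=\tau(\cdot)/\tau(e)$, and that they generate $eRe$---thereby recovering the relevant case of Dykema's Theorem 1.2. Set $\lambda=\tau(e)$, $D=\mathbb{C}e\oplus P(1-e)\subset P$ (so that $S=D*Q$), and $\zeta=e-\lambda 1$. The degenerate cases $e=0$ and $e=1$ (where $D=\mathbb{C}1$ and $S$ collapses to $Q$) are immediate, so assume $0<\lambda<1$. The two elementary facts I use repeatedly are that, since $e$ is central in $P$, one has $ea=ae=a$ for $a\in Pe$ and $e$ annihilates $P(1-e)$ on either side, and that $e\zeta e=(1-\lambda)e\in\mathbb{C}e$.

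First I would record the structure of $eSe$. Decomposing a centered letter $c\in P\ominus\mathbb{C}1$ along the orthogonal splitting $P\ominus\mathbb{C}1=(Pe\ominus\mathbb{C}e)\oplus(P(1-e)\ominus\mathbb{C}(1-e))\oplus\mathbb{C}\zeta$, one checks that compression by $e$ sends the first summand identically to a $Pe$-letter, the second to $0$, and $\zeta$ to a scalar multiple of $e$. Consequently, expanding a reduced word $w$ of $S=D*Q$ and compressing by $e$, every \emph{boundary} $D$-letter is absorbed into a scalar (or killed), so that modulo $\mathbb{C}e$ the space $eSe\ominus\mathbb{C}e$ is spanned by the ``$Q$-bordered'' words $e\,x_1 d_1 x_2\cdots d_{k-1}x_k\,e$, where $x_i\in Q\ominus\mathbb{C}1$ and the interior letters satisfy $d_i\in D\ominus\mathbb{C}1\subset P\ominus\mathbb{C}1$.

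For freeness it suffices, by $*$-multilinearity of the moment condition and weak continuity, to verify the vanishing of $\tau_e$ on alternating products of the spanning elements above together with centered $a\in Pe\ominus\mathbb{C}e$. Using $ea=ae=a$ to absorb all interior factors of $e$, an alternating product such as $a_1 b_1 a_2 b_2\cdots a_m b_m$ collapses to $a_1\,x^{(1)}_1 d^{(1)}_1\cdots x^{(m)}_{k_m}$ followed by a single boundary $e$. This is a reduced word in $R=P*Q$: the letters strictly alternate between $P\ominus\mathbb{C}1$ (the $a_i$ and the interior $d$'s) and $Q\ominus\mathbb{C}1$ (the $x$'s), and all are centered, so its trace vanishes; writing the leftover boundary $e=\lambda 1+\zeta$ produces only further reduced words, again of trace $0$. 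The remaining alternation patterns are handled identically, which establishes freeness.

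Generation I would prove by induction on the length of a reduced word $w$, showing $ewe\in W^*(Pe,eSe)$; since such $ewe$ are weakly dense in $eRe$, this yields $W^*(Pe,eSe)=eRe$. If $w$ begins with a $P$-letter $c_0$, the expansion above gives $ewe=\bigl(c_0^{(e)}+\gamma_0(1-\lambda)e\bigr)\cdot e(x_1\cdots c_n)e$, a product of an element of $Pe$ with a strictly shorter compressed word that lies in $W^*(Pe,eSe)$ by induction; if $w$ begins with a $Q$-letter one peels off the initial $Q$-bordered block, which is an element of $eSe$, and recurses. Combined with freeness this gives $eRe\cong Pe*eSe$. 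The main subtlety to get right is the bookkeeping of the three types of compressed $P$-letters---in particular that the scalar fluctuation $\zeta$ re-enters as a genuine centered $P$-letter in the \emph{interior} of words while being absorbed into $\mathbb{C}e$ at the \emph{boundary}---since this dichotomy is exactly what keeps the internal structure of $Pe$ free from the $Q$-generated part while its size $\lambda=\tau(e)$ is retained inside $eSe$.
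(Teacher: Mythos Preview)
The paper does not give its own proof of this statement; it is recorded as a Fact and attributed to \cite[Theorem 1.2]{Dy93}, then used as a black box in the proof of Claim \ref{compress}. So there is no ``paper's own proof'' to compare against---what you have written is a direct verification of the two defining properties (freeness and generation), which is essentially how Dykema's original argument goes.

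Your freeness argument is correct. The key observations---that a boundary $D$-letter compressed by $e$ becomes a scalar multiple of $e$, that the $Q$-bordered words $e\,x_1 d_1\cdots d_{k-1}x_k\,e$ are automatically $\tau_e$-centered, and that after absorbing interior $e$'s one obtains a genuinely reduced word in $P*Q$---are exactly the right ones, and the handling of leftover boundary $e$'s via $e=\lambda 1+\zeta$ is clean.

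The generation argument is also correct in outline, but your description of the $Q$-starting case is too terse to stand as written. ``Peeling off the initial $Q$-bordered block'' is not a single step: given $w=x_1 c_1 x_2\cdots$, you must decompose the first $P$-letter as $c_1=c_1^{(e)}+d_1$ with $c_1^{(e)}\in Pe\ominus\mathbb{C}e$ and $d_1\in D\ominus\mathbb{C}1$, obtaining
\[
ewe=(ex_1e)\cdot c_1^{(e)}\cdot e(\text{rest})e \;+\; e\,x_1 d_1 x_2 c_2\cdots\,e,
\]
where the first summand is (element of $eSe$)$\cdot$(element of $Pe$)$\cdot$(shorter compressed word), and the second summand has the same length but one fewer $P$-letter with a nontrivial $Pe$-component. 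Iterating on $c_2,c_3,\ldots$ terminates. You should spell this out, since as stated ``recurses'' suggests a strict length decrease that does not occur in a single step.
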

Specifically, we will use the following consequence of Fact \ref{dykema}:

\begin{claim}\label{compress}
Let $P,Q$ be tracial von Neumann algebras and $k\geq 2$. Assume that $P$ and $Q$ admit projections $e_1,\cdots,e_k\in P$ and $f_1,\cdots,f_k\in Q$ such that $e_i$ is central in $P$, $\tau(e_i)=\tau(f_i)=1/k$, for every $1\leq i\leq k$, $\sum_{j=1}^ke_j=1$ and $\sum_{j=1}^kf_j=1$. Then $e_1(P*Q)e_1\cong Pe_1*\cdots*Pe_k*e_1((\mathbb Ce_1\oplus\cdots\oplus\mathbb Ce_k)*Q)e_1$.

\end{claim}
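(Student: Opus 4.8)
The plan is to iterate Fact \ref{dykema} $k$ times, peeling off one central summand $Pe_j$ at each stage, and then compress by the projection $e_1$. First I would set up notation: let $P_0 = P$ and, for $1 \le j \le k$, let $P_j = \mathbb{C}(e_1 + \cdots + e_j) \oplus P(e_{j+1} + \cdots + e_k)$, viewed as a central-summand-coarsened version of $P$ where the first $j$ blocks have been collapsed to scalars on their sum... actually, more precisely, I want to collapse each $e_i$, $i \le j$, separately to $\mathbb{C}e_i$, so set $P_j = \mathbb{C}e_1 \oplus \cdots \oplus \mathbb{C}e_j \oplus P(e_{j+1}+\cdots+e_k)$. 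Then $P_0 = P$, $P_k = \mathbb{C}e_1 \oplus \cdots \oplus \mathbb{C}e_k$, and each $P_{j}$ is obtained from $P_{j-1}$ by replacing the central summand corresponding to $e_j$ (which is $Pe_j$, since $e_j$ is central) with the scalars $\mathbb{C}e_j$. The key observation is that in $P_{j-1}$ the projection $e_j$ is still central, so Fact \ref{dykema} applies with $P \rightsquigarrow P_{j-1}$, $Q \rightsquigarrow Q$, $e \rightsquigarrow e_j$.

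Next I would run the induction. Applying Fact \ref{dykema} at stage $j$ (with the notation $R_{j-1} := P_{j-1} * Q$ and $S_j := P_j * Q \subset R_{j-1}$) gives that $Pe_j$ and $e_j S_j e_j$ are free and generate $e_j R_{j-1} e_j$, i.e.
\[
e_j (P_{j-1} * Q) e_j \cong Pe_j * e_j (P_j * Q) e_j .
\]
The catch is that this decomposes the corner cut by $e_j$, not by $e_1$; to chain the isomorphisms I need all corners taken at $e_1$. Here I would use that $P * Q$ is a II$_1$ factor (a free product of diffuse algebras, at least once we note $P$ and $Q$ are diffuse — if not, one still has the partial-isometry argument below), so all the projections $e_j$, $f_j$ of equal trace $1/k$ are Murray–von Neumann equivalent inside the relevant factor, and a unitary (or partial isometry) conjugation identifies $e_j(\cdot)e_j$ with $e_1(\cdot)e_1$ compatibly; concretely, the $f_j$'s can be used as a system of matrix units in $Q$ linking $f_1, \dots, f_k$, hence linking $e_1, \dots, e_k$ inside $P * Q$ after the first step, and one checks $\tau(e_j) = \tau(f_j) = 1/k$ makes $e_j \sim e_1$ in each $S_j$. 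Iterating and splicing, $e_1(P*Q)e_1 \cong Pe_1 * Pe_2 * \cdots * Pe_k * e_1(P_k * Q)e_1$, where $P_k * Q = (\mathbb{C}e_1 \oplus \cdots \oplus \mathbb{C}e_k) * Q$ and $e_1$ is a minimal projection of the abelian summand. This is exactly the claimed formula, noting $Pe_i$ is unchanged throughout since $e_i$ is central.

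The step I expect to be the main obstacle is the bookkeeping that lets me \emph{replace} $e_j$-corners by $e_1$-corners at each stage while keeping freeness intact — i.e.\ verifying that after conjugating by an appropriate unitary/partial isometry $w_j$ (built from matrix units $\{f_i\}$ in $Q$ and the identifications coming from earlier steps) the subalgebras $Pe_i$ sitting inside $e_1 R e_1$ remain in free position and the ``tail'' $e_1(P_k * Q)e_1$ is correctly identified. The cleanest way to handle this is probably to do the whole argument at once: fix matrix units $v_{ij} \in Q$ with $v_{ij}^* v_{ij} = f_j$, $v_{ij} v_{ij}^* = f_i$, $v_{ii} = f_i$, use these to transport each $e_j$ to a sub-projection of $e_1$, observe that conjugation by $\sum_j v_{1j}$-type isometries turns the corner $\bigoplus_j e_j R e_j$-style decomposition into the $e_1$-corner, and invoke freeness of $\{Pe_j\}_j \cup \{e_j S_j e_j\}_j$ produced by the $k$-fold application of Fact \ref{dykema}. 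I would also remark that one should double-check the case where $P$ or $Q$ fails to be diffuse (so $P*Q$ need not be a factor) — in the intended application both $P$ and $Q$ are built from diffuse abelian pieces, so $P * Q$ is a II$_1$ factor and the equivalence of projections of equal trace is automatic, which removes the obstacle entirely. Modulo this, the proof is a routine induction on $k$ using Fact \ref{dykema} as the inductive step.
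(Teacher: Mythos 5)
Your overall strategy (the tower $P_j=\mathbb{C}e_1\oplus\cdots\oplus\mathbb{C}e_j\oplus Pe_j'$, iterating Fact \ref{dykema}, then swapping the corner at $e_{j+1}$ for the corner at $e_1$) is exactly the paper's, but the step you yourself flag as the main obstacle --- why $e_1\sim e_{j+1}$ \emph{inside each coarsened algebra} $S_j=P_j*Q$ --- is not justified by either of the arguments you offer, and this is a genuine gap. First, the ``matrix units $v_{ij}\in Q$ linking $f_1,\dots,f_k$'' need not exist: the hypotheses only give projections $f_i\in Q$ of trace $1/k$, and $Q$ may be abelian (indeed, in the application in Proposition \ref{free_product_abelian} the claim is invoked with free factors of the form $\mathbb{C}p_{1}\oplus\cdots\oplus\mathbb{C}p_{k}$, so abelian, even finite-dimensional, $Q$ genuinely occurs); moreover, since the $e_i$'s and $Q$ are free, partial isometries in $Q$ cannot carry $e_{j+1}$ to $e_1$ anyway. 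Second, factoriality of $P*Q$ is neither guaranteed by the hypotheses (take $P=Q=\mathbb{C}^2$, $k=2$, where $P*Q\cong L(\mathbb{Z}/2\mathbb{Z}*\mathbb{Z}/2\mathbb{Z})$ is not a factor, yet the claim must still hold) nor sufficient: an equivalence $e_1\sim e_{j+1}$ implemented by a partial isometry of $P*Q$ that does not lie in $P_j*Q$ does not identify $e_{j+1}(P_j*Q)e_{j+1}$ with $e_1(P_j*Q)e_1$, which is what the induction requires. Asserting ``one checks $e_j\sim e_1$ in each $S_j$'' is precisely the point that needs a proof.

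The missing idea, which is how the paper closes this gap, is that the equivalence already holds in the small subalgebra $(\mathbb{C}e_1\oplus\cdots\oplus\mathbb{C}e_k)*(\mathbb{C}f_1\oplus\cdots\oplus\mathbb{C}f_k)\cong L(\mathbb{Z}/k\mathbb{Z}*\mathbb{Z}/k\mathbb{Z})$: for $k\geq 3$ this is a II$_1$ factor, so projections of equal trace are equivalent, and for $k=2$ the equivalence of $e_1$ and $e_2$ is \cite[Remark 3.3]{Dy94}. Since this subalgebra is contained in $(\mathbb{C}e_1\oplus\cdots\oplus\mathbb{C}e_k)*Q\subset P_j*Q$ for every $j$, one gets $e_1\sim e_{j+1}$ inside both $P_j*Q$ and $P_{j+1}*Q$, and the corner swaps $e_1(P_j*Q)e_1\cong e_{j+1}(P_j*Q)e_{j+1}$ and $e_{j+1}(P_{j+1}*Q)e_{j+1}\cong e_1(P_{j+1}*Q)e_1$ become legitimate. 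A further small advantage of running the induction as a chain of abstract isomorphisms, as the paper does, is that you never need to verify that the algebras $Pe_i$ ``remain in free position'' inside one fixed ambient corner after conjugation --- the bookkeeping you were worried about disappears. With the equivalence established in this way, your induction goes through verbatim; without it, the argument does not apply to the generality in which the claim is stated and used.
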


\begin{proof}[Proof of Claim \ref{compress}]
Note that $e_1$ is equivalent to $e_j$ in $(\mathbb Ce_1\oplus\dots\oplus\mathbb Ce_k)*(\mathbb Cf_1\oplus\cdots\oplus\mathbb Cf_k)$ and so in $(\mathbb Ce_1\oplus\dots\oplus\mathbb Ce_k)*Q$, for every $2\leq j\leq k$. This
follows from \cite[Remark 3.3]{Dy94} if $k=2$
and because $(\mathbb Ce_1\oplus\dots\oplus\mathbb Ce_k)*(\mathbb Cf_1\oplus\cdots\oplus\mathbb Cf_k)\cong\text{L}(\mathbb Z/k\mathbb Z*\mathbb Z/k\mathbb Z)$ is a II$_1$ factor if $k\geq 3$.

Denote $e_j'=1-\sum_{l=1}^je_l$ and $P_j=\mathbb Ce_1\oplus\cdots\oplus\mathbb Ce_j\oplus Pe_j'$, for every $1\leq j\leq k$. We claim that
\begin{equation}\label{freefactor}\text{$e_1(P*Q)e_1\cong Pe_1*\cdots*Pe_j*e_1(P_j*Q)e_1$, for every $1\leq j\leq k$}.\end{equation}
When $j=1$, $e_1'=1-e_1$ and thus equation \eqref{freefactor} follows from Fact \ref{dykema}. Assume that \eqref{freefactor} holds for some $1\leq j\leq k-1$.
Since $e_{j+1}\in P_j$ is a central projection, $P_je_{j+1}=Pe_{j+1}$ and $\mathbb Ce_{j+1}\oplus P_j(1-e_{j+1})=P_{j+1}$,  Fact \ref{dykema} gives that $e_{j+1}(P_j*Q)e_{j+1}\cong Pe_{j+1}*e_{j+1}(P_{j+1}*Q)e_{j+1}$. 
The observation made in the beginning of the proof implies that $e_1$ is equivalent to $e_{j+1}$ in $P_j*Q$ and $P_{j+1}*Q$. Thus, $e_1(P_j*Q)e_1\cong e_{j+1}(P_j*Q)e_{j+1}$ and $e_1(P_{j+1}*Q)e_1\cong e_{j+1}(P_{j+1}*Q)e_{j+1}$.
Altogether,  $e_1(P_j*Q)e_1\cong Pe_{j+1}*e_1(P_{j+1}*Q)e_1$. This implies that \eqref{freefactor} holds for $j+1$  and, by induction, proves \eqref{freefactor}.
For $j=k$, \eqref{freefactor} gives the claim.
\end{proof}

To prove the proposition, assume first that $I$ is finite. Take $I=\{1,\cdots,n\}$, for some $n\geq 2$. For $1\leq i\leq n$, put $B_i=\mathbb Cp_{i,1}\oplus\cdots\oplus\mathbb Cp_{i,k}$ and  $C_i=B_1*\cdots *B_i*A_{i+1}*\cdots*A_n.$
We claim that
\begin{equation}\label{compress2}
\text{$p_{i,1}Mp_{i,1}\cong (*_{1\leq l\leq i,1\leq j\leq k}A_lp_{l,j})*p_{i,1}C_ip_{i,1}$, for every $1\leq i\leq n$.}
\end{equation}
The case $i=1$ follows from Claim \ref{compress}. Assume that \eqref{compress2} holds for some $1\leq i\leq n-1$. Since the projections $p_{i,1}$ and $p_{i+1,1}$ are equivalent in $C_i$ by the observation made in the beginning of the proof of Claim \ref{compress}, we get that $p_{i,1}Mp_{i,1}\cong p_{i+1,1}Mp_{i+1,1}$ and $p_{i,1}C_ip_{i,1}\cong p_{i+1,1}C_ip_{i+1,1}$. 
By applying Claim \ref{dykema} to $C_i=A_{i+1}*(B_1*\cdots*B_i*A_{i+2}*\cdots*A_k)$ and the projections $(p_{i+1,j})_{j=1}^k\subset A_{i+1}$, 
we get that $p_{i+1,1}C_ip_{i+1,1}\cong (*_{1\leq j\leq k}A_{i+1}p_{i+1,j})*p_{i+1,1}C_{i+1}p_{i+1,1}$. The last three isomorphisms together imply that \eqref{compress2} holds for $i+1$. By induction, this proves \eqref{compress2}.

Next, \eqref{compress2} for $i=n$ gives that $M^{1/k}\cong (*_{1\leq i\leq n,1\leq j\leq k}A_ip_{i,j})*p_{n,1}C_np_{n,1}$. We will prove that \begin{equation}\label{C_n}p_{n,1}C_np_{n,1}\cong L\mathbb F_{nk(k-1)-k^2+1}\end{equation} and thus finish the proof of case (1) by analyzing three separate cases.

If $n=k=2$, then $C_2\cong L\mathbb Z\otimes\mathbb M_2(\mathbb C)$ and  \cite[Proposition 3.2]{Dy94} impies that  $p_{2,1}C_2p_{2,1}\cong L\mathbb Z$. 
If $n>2$ or $k>2$, then
$C_n\cong L(*_{i=1}^n\mathbb Z/k\mathbb Z)$ is a II$_1$ factor. 
Since $\tau(p_{n,1})=1/k$, we get that $p_{n,1}C_np_{n,1}\cong L(*_{i=1}^n\mathbb Z/k\mathbb Z)^{1/k}$. 
Assume first that $k=2$ and $n>2$. Recall that $L(*_{j=1}^2\mathbb Z/2\mathbb Z)\cong L\mathbb Z\otimes\mathbb M_2(\mathbb C)$ and  $(A\otimes\mathbb M_2(\mathbb C))*L(\mathbb Z/2\mathbb Z)\cong (A*L\mathbb F_2)\otimes\mathbb M_2(\mathbb C)$, for every tracial von Neumann algebra $A$, by \cite[Theorem 3.5 (ii)]{Dy94}. Combining these facts with \eqref{interpolated} and using induction gives that $L(*_{i=1}^n\mathbb Z/2\mathbb Z)\cong L\mathbb F_{n/2}$, thus $L(*_{i=1}^n\mathbb Z/2\mathbb Z)^{1/2}\cong L\mathbb F_{2n-3}$. Finally, assume that $k>2$. Then \cite[Corollary 5.3]{Dy93} gives that $L(\mathbb Z/k\mathbb Z*\mathbb Z/k\mathbb Z)\cong L\mathbb F_{2(1-1/k)}$, while \cite[Proposition 2.4]{Dy93} gives that $L\mathbb F_r*L(\mathbb Z/k\mathbb Z)\cong L\mathbb F_{r+1-1/k}$, for every $r>1$.  By combining these facts, we get that $L(*_{i=1}^n\mathbb Z/k\mathbb Z)\cong L\mathbb F_{n(1-1/k)}$. Hence, using \eqref{interpolated} we derive that $L(*_{i=1}^n\mathbb Z/k\mathbb Z)^{1/k}\cong L\mathbb F_{1+k^2[n(1-1/k)-1]}=L\mathbb F_{1+nk(k-1)-k^2}.$
This altogether proves \eqref{C_n}.

To treat case (2), assume that $I$ is infinite. Take $I=\mathbb N$. For $i\geq 0$, let $M_i=A_{2i+1}*A_{2i+2}$. By applying case (1), we get that $M_i$ is a II$_1$ factor and $M_i^{1/k}\cong(*_{i\leq l\leq i+1,1\leq j\leq k}A_lp_{l,j})*L\mathbb F_{(k-1)^2}$, for every $i\geq 0$. Since $M=*_{i\geq 0}M_i$, \cite[Theorem 1.5]{DyRa} implies that $M^{1/k}\cong*_{k\geq 0}M_i^{1/k}$.  Thus, $M^{1/k}\cong (*_{1\leq i,1\leq j\leq k}A_ip_{i,j})*L\mathbb F_\infty$. Since $*_{1\leq i,1\leq j\leq k}A_ip_{i,j}$ is a free product of infinitely many II$_1$ factors, it freely absorbs $L\mathbb F_\infty$ by \cite[Theorem 1.5]{DyRa}. This finishes the proof of case (2).
\end{proof}

We say that an abelian tracial von Neumann algebra $(A,\tau)$ is {\it homogeneous} if for every $k\in\mathbb N$, there exists a partition of unity into $k$ projections $p_1,\cdots,p_k\in A$ such that for every $1\leq i\leq k$ we have that $\tau(p_i)=1/k$ and $(Ap_i,k\;\tau_{|Ap_i})$ is isomorphic to $(A,\tau)$. A homogeneous abelian von Neumann algebra is necessarily diffuse. Also, note that $L\mathbb Z$ and $(L\mathbb Z)^\omega$ are homogenenous, and that the direct sum of two homogeneous abelian von Neumann algebras is homogeneous.

\begin{cor}\label{amplify} Let $A$ be a homogeneous abelian tracial von Neumann algebra.
Then we have: 
\begin{enumerate}

 \item 
 If $2\leq n<\infty$ and $k\geq 1$, then $(A^{*n})^{1/k} \simeq A^{nk} * L\mathbb F_{1+nk(k-1)-k^2}$. 

 \item $\mathbb Q\subset\mathcal F(A^{*\infty})$.
 
 \end{enumerate}
\end{cor}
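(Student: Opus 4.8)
The plan is to derive Corollary \ref{amplify} directly from Proposition \ref{free_product_abelian} by exploiting the homogeneity hypothesis on $A$. First I would unwind the definitions: if $A$ is homogeneous, then for every $k\in\mathbb N$ there is a partition of unity $p_1,\dots,p_k\in A$ into projections of trace $1/k$ with $(Ap_i, k\,\tau_{|Ap_i})\cong (A,\tau)$. For item (1), set $M=A^{*n}=A_1*\cdots*A_n$ with each $A_i$ a copy of $(A,\tau)$, and choose in each $A_i$ such a partition $p_{i,1},\dots,p_{i,k}$. Since $I=\{1,\dots,n\}$ is finite, Proposition \ref{free_product_abelian}(1) applies and gives
\[
M^{1/k}\cong \bigl(*_{1\le i\le n,\,1\le j\le k} A_ip_{i,j}\bigr)*L\mathbb F_{1+nk(k-1)-k^2}.
\]
Now the homogeneity hypothesis tells us that each $A_ip_{i,j}$, with its renormalized trace, is isomorphic to $(A,\tau)$ as a tracial von Neumann algebra. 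A free product of tracial von Neumann algebras depends only on the isomorphism classes of the factors (together with their normalized traces), and here there are exactly $nk$ of them, so $*_{i,j} A_ip_{i,j}\cong A^{*nk}$. Substituting yields $(A^{*n})^{1/k}\cong A^{*nk}*L\mathbb F_{1+nk(k-1)-k^2}$, which is item (1) (I would write $\simeq$ as in the statement, or replace it by $\cong$ for consistency with the rest of the paper).

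For item (2), I would combine item (1) with the amplification formula for interpolated free group factors and the behavior of amplification under free products. The goal is to show $\mathbb Q\subset\mathcal F(A^{*\infty})$. Write $A^{*\infty}=*_{i\in\mathbb N}A_i$ with each $A_i\cong A$. Fix a positive integer $k$. Using homogeneity, choose in each $A_i$ a partition into $k$ projections of trace $1/k$ with corners isomorphic to $A$. Since $I=\mathbb N$ is infinite, Proposition \ref{free_product_abelian}(2) gives $(A^{*\infty})^{1/k}\cong *_{i\in\mathbb N,\,1\le j\le k} A_ip_{i,j}$, and again by homogeneity each $A_ip_{i,j}\cong A$, so the right-hand side is a free product of countably infinitely many copies of $A$, i.e. $A^{*\infty}$ itself. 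Hence $(A^{*\infty})^{1/k}\cong A^{*\infty}$ for every $k\ge 1$. Combined with the scaling law $(M^s)^t=M^{st}$ and the fact that $1\in\mathcal F(A^{*\infty})$, this shows that $\mathcal F(A^{*\infty})$ contains $1/k$ for every $k$, and being a subgroup of $\mathbb R_+^*$, it therefore contains all of $\mathbb Q_+^*=\mathbb Q\cap\mathbb R_+^*$, which is what ``$\mathbb Q\subset\mathcal F(A^{*\infty})$'' means.

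The only genuinely substantive point, and the one I would be most careful about, is the identification of the free product $*_{i,j} A_ip_{i,j}$ (with renormalized traces) with $A^{*nk}$, respectively $A^{*\infty}$: this uses that the reduced free product of tracial von Neumann algebras is a functorial construction depending only on the trace-preserving isomorphism classes of the building blocks, so that replacing each $A_ip_{i,j}$ by an isomorphic copy $(A,\tau)$ does not change the isomorphism type of the free product. This is standard (it is how $L\mathbb F_n=L\mathbb Z*\cdots*L\mathbb Z$ is even defined), but it is where the homogeneity hypothesis is used in an essential way — without it the corners $A_ip_{i,j}$ need not be isomorphic to $A$, and the clean formulas would fail. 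Everything else is bookkeeping with the already-established Proposition \ref{free_product_abelian}, the interpolated free group factor formulas \eqref{interpolated}, and the elementary properties of amplifications and fundamental groups recorded in Proposition \ref{fundamental group}.
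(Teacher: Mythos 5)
Your proof is correct and follows essentially the same route as the paper: both parts are deduced from Proposition \ref{free_product_abelian}, using homogeneity to identify the corners $A_ip_{i,j}$ with copies of $(A,\tau)$ and, for part (2), the group property of the fundamental group to pass from $1/k\in\mathcal F(A^{*\infty})$ to $\mathbb Q_+^*\subset\mathcal F(A^{*\infty})$. The paper's proof is just a terser version of exactly this argument.
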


\begin{proof}
Part (1) follows from Proposition \ref{free_product_abelian}. Proposition \ref{free_product_abelian} also implies that $1/k\in\mathcal F(A^{*\infty})$, for every $k\in\mathbb N$, and thus part (2) also follows.
\end{proof}

When $A$ is separable (and thus $A\cong L\mathbb Z$), Corollary \ref{amplify} recovers two  results of Voiculescu \cite{Vo_amplification}: the amplification formula $L\mathbb F_n^{1/k}\cong L\mathbb F_{nk^2-k+1}$ and the fact that $\mathbb Q\subset\mathcal F(L\mathbb F_\infty)$.
Corollary \ref{amplify} extends these results to non-separable homogenenous abelian von Neumann algebras $A$. 
Recall that Radulescu \cite{Rad_F_infty} showed that in fact $\mathcal F(L\mathbb F_\infty)=\mathbb R_+^*$. By analogy with this result, we expect that $\mathcal F(A^{*\infty})=\mathbb R_+^*$, for any  homogenenous abelian von Neumann algebras $A$.

\bibliographystyle{alpha} 
\bibliography{bib}

\end{document}